\begin{document}

\begin{frontmatter}
\title{Collision Times in Multicolor Urn Models and Sequential Graph Coloring With Applications to Discrete Logarithms}
\runtitle{Collision Times in Multicolor Urn Models}

\begin{aug}
\author{\fnms{Bhaswar~B.} \snm{Bhattacharya}\ead[label=e1]{bhaswar@stanford.edu}}

\affiliation{Stanford University}

\address{Department of Statistics\\ Stanford University,\\ California, USA\\ \printead{e1}}

\end{aug}

\begin{abstract}
Consider an urn model where at each step one of $q$ colors is sampled according to some probability distribution and a ball of that color is placed in an urn. The distribution of assigning balls to urns may depend on the color of the ball. Collisions occur when a ball is placed in an urn which already contains a ball of different color. Equivalently, this can be viewed as sequentially coloring a complete $q$-partite graph wherein a collision corresponds to the appearance of a monochromatic edge. Using a Poisson embedding technique, the limiting distribution of the first collision time is determined and the possible limits are explicitly described. Joint distribution of successive collision times and multi-fold collision times are also derived. The results can be used to obtain the limiting distributions of running times in various birthday problem based algorithms for solving the discrete logarithm problem,  generalizing previous results which only consider expected running times. Asymptotic distributions of the time of appearance of a monochromatic edge are also obtained for other graphs.
\end{abstract}

\begin{keyword}[class=MSC]
\kwd[Primary ]{05C15,60F05}
\kwd[; secondary ]{94A62, 60G55}
\end{keyword}

\begin{keyword}
\kwd{Discrete Logarithm}
\kwd{Graph coloring}
\kwd{Limit theorems}
\kwd{Poisson Embedding}
\end{keyword}

\end{frontmatter}

\section{Introduction} Suppose the vertices of a finite graph $G = (V,E)$, with $|V|=N$, are colored independently and uniformly at random with $c$ colors. The probability that the resulting coloring has no monochromatic edge, that is, it is a proper coloring is $\chi_G(c)/c^N$, where $\chi_G(c)$ denotes the number of proper colorings of $G$ using $c$-colors. The function $\chi_G$ is the chromatic polynomial of $G$, which is a central object in graph theory \cite{chromaticbook,toft_book}. A natural extension is to consider a general coloring distribution $\vec p=(p_1, p_2, \ldots , p_c)$, where the probability that a vertex is colored with color $a\in [c]$ is $p_a$ which is independent of the colors of the other vertices, where $p_a\ge 0$, and $\sum_{a=1}^c p_a=1$. Then the probability that $G$ is properly colored is  related to Stanley's generalized chromatic polynomial \cite{fadnavis,stanley}. Limit theorems for the number of monochromatic edges under the uniform coloring distribution, that is, $p_a=1/c$ for all $a\in [c]$, was derived recently by Bhattacharya et al. \cite{bbbpdsm}.

When the underlying graph $G$ is a complete graph, this reduces to the well-known birthday problem: by replacing the colors by birthdays, occurring with possibly non-uniform probabilities, the birthday problem can be seen as coloring the vertices of a complete graph independently with $c=365$ colors. The event that two people share the same birthday is the event of having a monochromatic edge in the colored graph.
The birthday problem was generalized to the sequential setting by Camarri and Pitman \cite{camarripitmanejp} as follows: in a stream of people, determine the distribution of the first time that a person arrives whose birthday is the same as that of some person previously in the stream. More generally, they derived the asymptotic distribution of the first repeat time in an i.i.d. $\mathbb P_N$ sequence, in a limiting regime with the probability distribution $\mathbb P_N$ depending on a parameter $N\in \mathbb N$. Formally, suppose that the $N$-th distribution $\P_N$ is a {\it ranked discrete distribution},  $p_{N1}\geq p_{N2}\geq \ldots \geq 0 \text{ and } \sum_{i=1}^\infty p_{N i} = 1$. A 
sequence $\sX_N:=(X_{N1}, X_{N2}, \ldots)$ of i.i.d. random variables distributed as $\P_N$ is said to have a {\it repeat at time $t$}, if $X_{Nt}=X_{Ns}$, for some $s<t$. The {\it first repeat time} 
\begin{align}\label{RN1}
R_{N1}=\inf\{t\in \N: r_N(t)=1\},
\end{align}
where $r_N(t)$ is the number of repeats in the sequence $\sX_N$ up to time $t$. In other words, $R_{N1}$ is the first time some element is observed twice in the sequence $\sX_N$. More generally, the $m$-th repeat time $R_{Nm}$ of the sequence $\sX_N$, is the minimum $t$ such that $r_N(t)=m$, that is, the first time that $m$ repetitions occur in the sequence $\sX_N$.

This can also be viewed as sequentially coloring the vertices of the infinite complete graph, independently with probability $\P_N$, and $R_{N1}$ is the first time that a monochromatic edge appears. Another way to rephrase this is in terms of an urn model with urns (corresponding to birthdays) indexed by $\{1, 2, \ldots \}$ and with infinitely many balls. Initially all the urns are empty, and at every subsequent time step a ball is dropped into urn $i$ with probability $p_{N i}$, where $\sum_{i=1}^\infty p_{N i}=1$, and $p_{N1}\geq p_{N2}\cdots \geq 0$. Then $R_{N1}$ is the first time that there are two balls in the same urn.

In the {\it uniform} case, where $p_{N i}=1/N$ for all $i\in\{1, 2, \ldots, N\}$, it is well known that for all $r\geq 0$, $R_{N1}/\sqrt{N}$ converges to the Rayleigh distribution with parameter 1. Camarri and Pitman \cite{camarripitmanejp} used the Poisson embedding technique and characterized the set of all possible asymptotic distributions of $R_{N1}$ derived from any sequence of general ranked distributions. In the uniform case, Arratia et al. \cite{arratiabirthday} derived the limiting distribution of the $m$-th repeat time $R_{Nm}$, when $m=O(N)$. 

The non-sequential version of the urn model described above is the classical occupancy scheme with infinitely many boxes, where balls are thrown independently into boxes with probability $\P_N$. Asymptotics for the number of boxes occupied by exactly $r$ balls are well known \cite{barbour_gnedin_joint_normality,gnedinsurvey}. In a different context, Paninsky \cite{paninsky} used $B_1$, the number of boxes with 1 ball, for testing uniformity given sparsely-sampled discrete data.  The  Poisson embedding technique is also useful in other occupancy urn problems: Holst \cite{holstreview,holstgeneralbirthday} used it to derive moments of a general quota problem; Holst \cite{holstextremes}  and later Neal \cite{couponcollectorneal} also used these techniques to obtain limiting distributions in coupon-collector problems.  For other variations of occupancy urn models and their applications, refer to \cite{gnedinsurvey,johnsonkotzurnmodel} and the references therein. For embedding P\'olya-type urn schemes into continuous time Markov branching processes refer to \cite{athreyakarlin,johnsonkotzurnmodel,urnbook}.



\subsection{Collision Times in a Multicolor Urn Model}A natural generalization of the birthday problem is to consider coincidences among individuals of different types, that is, in a room occupied with an equal number of boys and girls, when can one expect a boy and girl to share the same birthday. This can be viewed as an urn model with two colors, where balls are colored independently with probability 1/2 and placed in the urns uniformly. The event of having a matching birthday is same as having an urn with balls of both the colors. This event is often referred to as a collision. For exact expressions of the number of collisions, factorial moments and other related problems, refer to Nakata \cite{nakata} and the references therein. The number of collisions between two discrete distributions was also used by Batu et al. \cite{batu}  for distributional property testing. Wendl \cite{wendl} studied a very related problem and referred to some applications in collisions of airborne planes, celestial objects, and transportations.

In this paper we consider the sequential version of this problem, for general urn selection distributions. 

\begin{defn}\label{defn:uniform}
Consider an urn model with balls of $q$ distinct colors (corresponding to types) indexed by $\{1, 2, \ldots, q\}$, and urns (corresponding to birthdays) indexed by $\{1, 2, \ldots \}$. Initially all the boxes are empty, and at every subsequent time point the following steps are executed: 

\begin{description}
\item[1]({\it Color Selection}). A color is $a\in [q]$ is chosen uniformly, that is, with probability $1/q$.

\item[2]({\it Urn Selection}). If the color chosen is $a\in[q]$, then a ball with color $a$ is dropped into urn $i$ with probability $p_{Ni}$, where $p_{N1}\geq p_{N2}\cdots \geq 0$ and $\sum_{i=1}^\infty p_{N i}=1$, is a ranked discrete distribution.
\end{description}
\end{defn}

Let $C_t$ be the color of the ball chosen at the $t$-th step, and $Z_{Nt}$ be the urn to which the ball is assigned. The urn model described above is said to have a {\it collision at time $t$}, if $Z_{Ns}=Z_{Nt}$ and $C_s\ne C_t$, for some $s<t$. In other words, a collision happens when a ball is dropped in an urn which already contains a ball with a different color.  Given the above process, define the {\it first collision time} $T_{N 1}$ {\it to be the first time that there exist two balls with different colors in the same urn.}

Using the Poisson embedding technique, the limiting distribution of $T_{N 1}$ can be obtained:

\begin{thm}For $N, i \in \mathbb N$, let 
\begin{equation}
s_{N}=\left(\sum_{i}p_{N i}^2\right)^{\frac{1}{2}}, \quad \psi_{N i}=\frac{p_{N i}}{s_N},
\label{eq:sNphiN}
\end{equation}
Suppose that $\lim_{N\rightarrow \infty}p_{N 1}=0$, and $\psi_i=\lim_{N\rightarrow \infty}\psi_{N i}$ exists, for $i \in \mathbb N$. Then 
\begin{align}\label{eq:qcolorsfirstrepeatth}
\lim_{N \rightarrow \infty}\P(s_{N}& T_{N 1}> r)\nonumber\\
=&e^{-\frac{1}{2}\left(\frac{q-1}{q}\right)r^2\cdot\left(1-\sum_{i}\psi_i^2\right)}\prod_ie^{-\left(\frac{q-1}{q}\right) \psi_{i} r}\left(q-(q-1)e^{-\psi_i\frac{r}{q}}\right).
\end{align}
Conversely, if there exist positive constants $c_{N} \rightarrow 0$ and $d_{N}$ such that the distribution of $c_{N}(T_{N1}-d_{N})$ has a non-degenerate weak limit as $N \rightarrow \infty$, then $p_{N1}\rightarrow 0$ and limits $\psi_{i}$ exist as before. So the weak limit is just a rescaling of that described in (\ref{eq:qcolorsfirstrepeatth}), with $c_{N}/s_{N} \rightarrow \alpha$ for some $0 < \alpha  < \infty$, and $c_{N}d_{N}\rightarrow 0$.
\label{th:bipartitefirstrepeat}
\end{thm}


If the process described above is continued after the first collision time $T_{N1}$, more collisions occur. Recall, that a collision corresponds to a ball being dropped in an urn which already contains a ball with a different color.

\begin{defn}\label{mthcollision}
For $m\geq 1$, let $T_{Nm}$ be the time of the $m$-th collision, that is, 
$$T_{Nm}=\inf\left\{t \in \N: \sum_{i=1}^t K_t=m\right\},$$
where $K_t$ is the indicator variable which is 1 if and only if the urn model described above has a collision at time $t$. 
\end{defn}

From the continuous time embedding of the process, the joint convergence of the collision times can be obtained.

\begin{thm}\label{th:bipartitejointrepeat} Suppose $\lim_{N\rightarrow \infty}p_{N1} = 0$, $\psi_i=\lim_{N\rightarrow \infty} \psi_{Ni}$ exists for each $i \in \mathbb N$, and $s_N$ as in~(\ref{eq:sNphiN}). Then there is the convergence of $m$-dimensional distributions
$$(s_NT_{N1},  s_NT_{N2}, \ldots, s_NT_{Nm}) \stackrel{\sD}\rightarrow (\eta_1, \eta_2, \ldots, \eta_m),$$ where $0 < \eta_1 < \eta_2 < \cdots$ are the arrival times of a process $\sM$, which is the superposition of independent point processes $B^*$, $B_1^{-L_1}, B_2^{-L_2}, \cdots$, where: 
\begin{itemize}
\item $B^*$ is a Poisson process on $[0, \infty)$ of rate $(1-\sum_i\psi_i^2)t\cdot\left(1-\frac{1}{q}\right)$ at time $t$.

\item For each $i \in \mathbb N$, $B_i$ is the superposition of $q$ independent Poisson processes $$\{B^1_{i}(t)\}_{t\geq 0}, \{B^2_{i}(t)\}_{t\geq 0}, \ldots, \{B^q_{i}(t)\}_{t\geq 0}$$ on $[0, \infty)$ of rate $\psi_i/q$. Finally, $B_i^{-L_i}$ is the process $B_i$ with its first $L_i:=B_i(T_i')$ points removed, where $T_i'$ is the last arrival time in $B_i$  before  $T_i=\inf\{t \geq 0: B^a_{i}(t)>0 \text{ and } B^b_{i}(t)>0 \text{ for some } ,  a\ne b\}$.

\end{itemize}
\end{thm}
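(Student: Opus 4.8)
The plan is to adapt the Poisson embedding argument of Camarri and Pitman \cite{camarripitmanejp} to the $q$-color urn. First I would embed the discrete urn chain into continuous time: attach to each urn $i$ and each color $a\in[q]$ an independent Poisson process $N_{ni}^a$ of rate $p_{ni}/q$ recording the instants at which a color-$a$ ball lands in urn $i$. Their superposition is a rate-$1$ Poisson process whose $k$-th point is the $k$-th ball drop, so reading off the points in order recovers the discrete chain. In this picture the collisions occurring in urn $i$ form the point process obtained from $B_i:=\sum_a N_{ni}^a$ by deleting its initial monochromatic run, i.e.\ its first $L_i^{(n)}:=B_i(T_i^{(n)})$ points, where $T_i^{(n)}:=\sup\{t\ge 0:\exists\, a_0\in[q]\text{ with }N_{ni}^a(t)=0\ \forall a\ne a_0\}$, and the overall collision process is the superposition of these over all $i$. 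Writing $\widetilde T_{nm}$ for the continuous time of the $m$-th collision and $\Gamma_k$ for the $k$-th point of the governing rate-$1$ clock, we have $\widetilde T_{nm}=\Gamma_{T_{nm}}$; since $s_n^2=\sum_i p_{ni}^2\le p_{n1}\to 0$ and an elementary first-moment bound gives $T_{nm}\to\infty$ in probability, the strong law $\Gamma_k/k\to 1$ yields $\Gamma_{T_{nm}}/T_{nm}\to 1$ in probability, so once $(s_n\widetilde T_{n1},\dots,s_n\widetilde T_{nm})$ is shown to converge in distribution the same limit holds for $(s_nT_{n1},\dots,s_nT_{nm})$.

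It then remains to identify the weak limit of the collision point process run on the time scale $t\mapsto t/s_n$; split the urns according to whether $\psi_i>0$ or $\psi_i=0$. For each $i$ with $\psi_i>0$ the rescaled color processes $(N_{ni}^1(\cdot/s_n),\dots,N_{ni}^q(\cdot/s_n))$ are independent Poisson of rate $\psi_{ni}/q\to\psi_i/q$, hence converge jointly to $(B_i^1,\dots,B_i^q)$; since the map ``superpose and delete the initial monochromatic run'' is a.s.\ continuous at the limit configuration --- which has no ties and, because each $B_i^a$ fires in finite time, a well-defined finite deletion count $L_i=B_i(T_i)$ --- the rescaled collision process of urn $i$ converges to $B_i^{-L_i}$. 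The urns with $\psi_i=0$ are treated in aggregate: a collision in urn $i$ before time $t/s_n$ requires at least two balls of distinct colors, an event of probability $\tfrac{q-1}{2q}(\psi_{ni}t)^2+O((\psi_{ni}t)^3)$, while two or more collisions there carry an extra factor $O(\psi_{ni})$, so a Poisson limit theorem for this triangular array of rare independent point processes shows that their superposition converges to a Poisson process whose mean measure on $[0,t]$ equals $\lim_n\tfrac{q-1}{2q}t^2\sum_{i:\psi_i=0}\psi_{ni}^2=\tfrac{q-1}{2q}(1-\sum_i\psi_i^2)t^2$, using $\sum_i\psi_{ni}^2=1$; this is exactly $B^*$. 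Independence of the $N_{ni}^a$ across $i$ makes $B^*,B_1^{-L_1},B_2^{-L_2},\dots$ independent, so the rescaled collision process converges to their superposition $\sM$, and reading off the first $m$ points gives $(s_n\widetilde T_{n1},\dots,s_n\widetilde T_{nm})\stackrel{\sD}{\longrightarrow}(\eta_1,\dots,\eta_m)$; one also checks $\sM$ has infinitely many points a.s.\ (if $\sum_i\psi_i^2<1$ then already $B^*$ is infinite, and otherwise some $B_i^{-L_i}$ is, each $B_i$ being a.s.\ infinite with $L_i<\infty$), so the $\eta_j$ are well defined. Specializing to $m=1$ and evaluating $\P(\eta_1>r)$ as a product over the independent components recovers Theorem~\ref{th:bipartitefirstrepeat}, a useful consistency check.

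The main obstacle is the aggregate analysis of the $\psi_i=0$ urns, i.e.\ making the Poisson approximation of their superposed collision processes rigorous and uniform. One needs the individual masses to be negligible, $\sup_i\P(\text{urn }i\text{ collides before }t/s_n)\to 0$ on compact time sets, which uses $p_{n1}\to 0$ together with the normalization $\sum_i\psi_{ni}^2=1$ to rule out a persistent heavy contributor among the ``small'' indices; one also needs the $O((\psi_{ni}t)^3)$ errors to be summable over the infinitely many urns and the block of finitely many large urns to be spliced correctly with the tail. These are precisely the places where the quantitative estimates behind the embedding do the work, along the lines of \cite{camarripitmanejp}. A secondary point to check with care is the claimed a.s.\ continuity of the ``delete the monochromatic run'' functional at the limiting configuration, since the number $L_i$ of deleted points is itself random, determined by the first time a second color appears in urn $i$.
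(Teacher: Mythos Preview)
Your outline coincides with the paper's in overall architecture: the same Poisson embedding, the same description of the per-urn collision process as $N_{ni}$ with its initial monochromatic run removed (the paper also records that the rescaled waiting time $s_nF_{ni}$ for a second color to appear in urn $i$ is exponential with mean $r_q/\psi_{ni}$, $r_q=q/(q-1)$), and the same continuous-to-discrete transfer via $T_{nm}/\tau_{nm}\to 1$. Where you diverge is in the tool used on the aggregate of small urns. You propose to estimate per-urn collision probabilities directly, $\P(\text{collision in }[0,t/s_n])=\tfrac{q-1}{2q}(\psi_{ni}t)^2+O((\psi_{ni}t)^3)$, and then invoke a triangular-array Poisson limit theorem; the paper instead follows Camarri--Pitman and argues through the predictable compensator $C_n(t)=\sum_i\psi_{ni}(t-s_nF_{ni})_+$ of the rescaled collision process, showing that $\E C_n(t)$ converges and $\Var C_n(t)\to 0$, and then appealing to the standard criterion that convergence of compensators to a continuous deterministic limit forces weak convergence to the Poisson process with that mean measure. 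The compensator route delivers point-process convergence in one stroke and sidesteps the uniform cubic-error bookkeeping you correctly flag as the main obstacle; your route is more elementary but needs exactly those estimates carried out. A second, smaller difference: the paper splits via a moving cutoff $j_n$ with $\sum_{i\le j_n}\psi_{ni}^2\to\sum_i\psi_i^2$ rather than the fixed partition $\{\psi_i>0\}$ versus $\{\psi_i=0\}$, which is what one actually needs when infinitely many $\psi_i$ are positive.
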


The time $T_i'$ defined above is the last arrival time when all points of $B_i$ have the same color. Therefore, removing the first $L_i:=B_i(T_i')$ points ensures that any subsequent arrival in $B_i$ corresponds to a collision  in the urn labelled $i$. 

The urn model described in Definition~\ref{defn:uniform} can be generalized further by considering non-uniform color selection and letting the probability of selecting an urn to depend on the color selected. 

\begin{defn} \label{defn:nonuniform}
Consider an urn model with balls of $q$ distinct colors indexed by $\{1, 2, \ldots, q\}$, and urns indexed by $\{1, 2, \ldots \}$.  Initially all the boxes are empty, and at every time instance the following steps are executed:

\begin{description}
\item[1]({\it Non-uniform Color Selection}). A color is chosen with probability distribution $\pmb c=(c_1, c_2, \ldots c_q)$, that is, the probability of selecting the color $a\in [q]$ is $c_a$, where $c_a>0$ and $\sum_{a=1}^qc_a=1$.

\item[2]({\it Non-uniform Urn Selection}). If the color chosen is $a\in[q]$, then a ball with color $a$ is dropped into urn $i$ with probability $p_{Ni, a}$, where $\sum_{i=1}^\infty p_{Ni, a}=1$, for all $a\in [q]$.
\end{description}
\end{defn}

As in Definition~\ref{defn:uniform}, denote by $T_{N1}$ the first time there exist two balls with different colors in the same urn.  


\begin{thm}For $a\in [q]$, and $N, i \in \mathbb N$, let $$s^2_{N}=\sum_{i}\left(\sum_{a} c_ap_{Ni, a}\right)^2,\quad    \psi_{Ni, a}=\frac{p_{Ni, a}}{s_{N}}.$$ Suppose that $\lim_{N\rightarrow \infty}\max_ip_{Ni, a}=0$ and $\psi_{i, a}=\lim_{N \rightarrow \infty}\psi_{Ni, a}$ exists, for all $a\in [q]$ and $i \in \mathbb N$. Moreover, assume that $\phi_a=\lim_{N\rightarrow \infty}\sum_i\psi_{Ni, a}^2$ exists for all $a\in[q]$. Then 
\begin{align}
\lim_{N\rightarrow \infty}\P(s_{N}& T_{N1}> r)\nonumber\\
=& e^{-\left(1-\beta\right)\frac{r^2}{2}}\prod_ie^{-r\sum_{a=1}^q c_a\psi_{i, a}} \left(1+\sum_{a=1}^q e^{\psi_{i, a}c_a r}-q\right),
\label{eq:qcolorsgen}
\end{align}
where $\beta=\sum_{a=1}^q c_a^2\phi_a+\sum_{i}\sum_{a\ne b} c_ac_b\psi_{i, a}\psi_{i, b}$.
\label{th:qcolorurn}
\end{thm}

Theorem \ref{th:bipartitefirstrepeat} is a special case of the above theorem when $c_a=1/q$ and $p_{Ni, a}=p_{N i}$, for all $a\in [q]$. Another special case was considered by Selivanov \cite{selivanov}, where only Rayleigh distributions were obtained as limits.\footnote{Selivanov \cite[Theorem 4.1]{selivanov} claims that $s_NT_{N1}$ converges to a Rayleigh distribution, whenever $\sum_{i}p_{Ni}^2\rightarrow 0$ and $p_{N1}(\sum_{i}p_{Ni}^2)^{-\frac{1}{2}}<c$, for some constant $c$. However, the second condition is vacuously true for all distributions, for any $c>1$ (since $p_{Ni}^2\leq \sum_{i}p_{Ni}^2$, for all $i\geq 1$, implies that $p_{N1}^2\leq \sum_{i}p_{Ni}^2$). This implies that $s_NT_{N1}$ converges to a Rayleigh distribution, whenever $\sum_{i}p_{Ni}^2\rightarrow 0$. However, Theorem~\ref{th:bipartitefirstrepeat} shows that this is clearly incorrect, since the conditions $p_{N1}\rightarrow 0$ and $\sum_{i}p_{Ni}^2\rightarrow 0$ are equivalent (see Examples \ref{exnu1} and \ref{exnu2} for specific counterexamples).  A possible fix to Selivanov's condition is to assume that $p_{N1}(\sum_{i}p_{Ni}^2)^{-\frac{1}{2}}\rightarrow 0$. This would imply that $\psi_i=\lim_{N\rightarrow \infty}\psi_{Ni}=0$, for all $i \geq 1$, and by~\eqref{eq:qcolorsfirstrepeatth},  $\lim_{N \rightarrow \infty}\P(s_{N}T_{N 1}> r)=e^{-\frac{r^2}{4}}$, for $q=2$.}  Recently, Galbraith and Holmes \cite{birthday_discrete_logarithm} considered a variant of the urn model in Definition~\ref{defn:nonuniform}, where the color selection probabilities change with time, and used the Chen-Stein method to determine the expected first collision time. Theorem \ref{th:qcolorurn} extends these results and characterizes the different limiting distributions that may arise. Moreover, this general theorem can be used to find the asymptotic distributions of the running times for a   class of algorithms for solving the discrete logarithm problem (DLP) that requires generalizations of the birthday problem (details in Section \ref{sec:introdlp} and Section \ref{sec:dlp}). 

\subsection{Sequential Graph Coloring}
\label{sec:paintro}

The repeat time $R_{N1}$ of Pitman and Camarri \cite{camarripitmanejp} is the first time
when a monochromatic edge appears while sequentially coloring the vertices of the (infinite) complete graph, independently with probability distribution $\P_N$.  The collision time $T_{N1}$ in the urn model defined in the previous section is the first time a monochromatic edge appears while sequentially coloring the (infinite) complete $q$-partite graph, where at every step one of the $q$ partite sets is chosen uniformly at random, and a vertex in that set is colored independently with probability $\P_N$. Similar questions can be asked for any sequence of naturally growing graphs, which motivate us to formulate the following general problem.

Let $\sG:=(G_{t})_{t\geq 1}$ be a deterministic sequence graphs $G_{t}=(V_{t}, E_{t})$, with $V_{t+1}\subset V_{t}$, $|V_{t+1}|=|V_{t}|+1$, and $E_{t}\subseteq E_{t+1}$. For $N\geq 1$, consider the following sequential coloring scheme:\footnote{More formally, we have a triangular array of growing graphs $((G_{Ns}))_{s\geq 1}$, whose vertices are colored independently with probability distribution $\P_N$. For notational
simplicity the process is only described for a sequence of growing graphs $(G_t)_{t\geq 1}$, and in all the examples considered this simplification suffices.}
\begin{itemize}

\item  Every vertex in $V_{1}$ is colored independently with a ranked discrete probability distribution $\P_N$. 

\item For $t\geq 2$, the new vertex $v\in V_{t}\backslash V_{t-1}$ is colored with $\P_N$:
$$\P(\text{the vertex } v \text{ has color } i\in \N)=p_{Ni},$$
independent of the color all the other vertices.\footnote{This should not be confused with the color of the ball in the urn model, described in the previous section. The urn model corresponds to coloring a complete $q$-partite graph, and the color of a ball corresponds to which of the $q$ sets the vertex belongs to.} Define the first {\it collision time} $T_{N1}^\sG$ to be the first index $s$ when a monochromatic edge $(u, v)\in E_s$ appears.  
\end{itemize}
Note that this general framework includes the repeat time $R_{N1}$ defined in~\eqref{RN1} (take $G_t=K_t$ the complete graph on $t$ vertices), and the collision time $T_{N1}$ ($G_t$ is a complete $q$-partite graph on $t$ vertices with the added randomness that at every step one of the $q$ partite sets is chosen uniformly at random).

A popular model for evolving random graphs is the {\it preferential attachment} (PA) model, introduced in a seminal paper by Barab\'asi and Albert \cite{pamodel}. It builds on the paradigm that new vertices are attached to those already present with probability proportional to their degree. This model enjoys many properties observed in social networks and other real world networks: the power law distribution of vertex degrees, a small diameter, and a small average degree \cite{bollobaspa,bollobasdiam}. For every fixed integer $m \geq  2$, the $PA(m)$ model is formally defined as follows:  
\begin{enumerate}
\item[]The graph sequence grows one vertex at a time, and at the $t$-th step the graph  $G_m^t$ is an undirected graph on the vertex set $V := [t]$ defined inductively as follows. $G^1_t$ consists of a single vertex with $m$ self-loops. For all $t > 1$, $G^t_m$ is built from $G^{t-1}_m$ by adding a new node labelled $t$ together with $m$ edges $e^t_1 = (t, v_1),\ldots , e^t_m=(t, v_m)$ inserted one after the other in this order. Let $G^t_{m,i-1}$ denote the graph right before the edge $e_i^t$ is added. Let $M_i = \sum_{v\in V} d_{G^t_{m,i-1}}(v)$ be the sum of the degrees of all the nodes in $G^t _{m,i-1}$. The endpoint $v_i$ is selected randomly such that $v_i = u$ with probability $d_{G^t_{m,i-1}}(u)/(M_i + 1)$, except for $t$ that is selected with probability $(d_{G^t_{m,i-1}}(t) + 1)/(M_i + 1)$. 
\end{enumerate}
Note that the graph $G_m^t$ can have loops and multiple edges. However, it forms a vanishing fraction of the total edges and for the coloring problem it suffices to consider the underlying simple graph (to be denoted by $S(G_{m}^t)$). 

By taking $\sG=(S(G_{m}^t))_{t\geq 1}$,  define  $T_{N1}^{PA(m)}$ to be the first time there is a monochromatic edge in the sequential coloring of $\sG$. Using the Stein's method for Poisson approximation the following theorem can be proved:

\begin{thm}Let $s_{N}$ be as in~\eqref{eq:sNphiN} and $\lim_{N\rightarrow \infty}p_{N1}\rightarrow 0$, as $N\rightarrow \infty$. Then $s_{N}^2T_{N1}^{PA(m)}\stackrel{\sD}\rightarrow \mathrm{Exp}(m)$, the exponential distribution with parameter $m$.
\label{th:palimitdistribution}
\end{thm}

The asymptotics for collision times can also be studied for any deterministic sequence of graphs which grow naturally one vertex at a time. This is demonstrated for the infinite path: take $\cZ=(P_t)_{t\geq 1}$, where $P_t$ is the path with $t$ vertices, and define $T_{N, m}^\cZ$ to be the first time there exists a monochromatic path with $m$ vertices while sequentially coloring $\cZ$. As in Theorem~\ref{th:palimitdistribution}, the limit distribution of $T_{N, m}^\cZ$ can be proved (see Theorem \ref{th:pathlimitdistribution}). 


\subsection{Applications to the Discrete Logarithm Problem}
\label{sec:introdlp}

The discrete logarithm problem (DLP) in a finite group $G$ is as follows: given $g, h \in G$ find an integer $a$ such that $h = g^a$.  Due to its presumed computational difficulty, the problem figures prominently in various cryptosystems, including the Diffie-Hellman key exchange, El Gamal system and elliptic curve cryptosystems. The best algorithms to solve the discrete logarithm problem in a general group originate in the seminal work of Pollard \cite{pollardmoc,pollardkangaroo}. A standard variant of the classical Pollard Rho algorithm for finding discrete logarithms can be described using a Markov chain on the cycle. The running time of the algorithm is the collision time of the Markov chain that is, the first time the chain visits a state that was already visited. Several years later, Kim et al. \cite{peresbirthday} finally proved the widely believed $\Theta(\sqrt {|G|})$ collision time for this walk.

The discrete logarithm problem in an interval asks: given $g, h \in G$ and an integer $N$ find an integer $a$, if it exists, such that $h = g^a$ and $0 \leq  a < N$. The DLP in an interval can be solved using the baby-step-giant-step algorithm in $\sqrt 2N$ group operations and storage of $O(\sqrt N)$ group elements. The Pollard kangaroo method \cite{pollardkangaroo} was designed to solve the DLP in an interval using a constant number of group elements of storage. Using distinguished points a heuristic average case expected complexity of essentially $2\sqrt N$ group operations and low storage can be obtained. Montenegro and Tetali \cite{montenegrotetali} gave a more rigorous analysis of the kangaroo method. Recently, Galbraith et al. \cite{galbraithpollardruprai} used a 4-kangaroo method, instead of the usual two, to obtain a heuristic average case expected running time of $(1.715+o(1))\sqrt{N}$.

There has been several recent work extending and improving Pollard's algorithms for variants of the discrete logarithm problem which require generalizations of the birthday problem \cite{galbraithpollardruprai,galbraithpkc,galbraithmultidimensional}. Gaudry and Schost \cite{gaudryschost} presented one of the first birthday problem based methods for solving the DLP in an interval. The algorithm is based on the collision time of 2 independent pseudo-random walks. A {\it tame walk} is a sequence of points $\{g^{a_i}\}_{i\geq 1}$ where $a_i\in T$ and a {\it wild walk} is a sequence of points $g^{b_i} = hg^{a_i}$ with $b_i\in W$, where $T, W\subseteq \{1, 2, \ldots, |G|\}$ are the tame and wild sets, respectively.  When the same element is visited by two different types of walk, there is a tame-wild collision giving an equation of the form $g^{a_i} = hg^{b_j}$, and the DLP is solved as $h = g^{a_i-b_j}$. Therefore, the running time of the algorithm is the time required until a tame and wild walk collide. 
The average expected running time of this algorithm is $2.08\sqrt N$ group operations on a serial computer. Recently, Galbraith et al. \cite{galbraithpollardruprai} proposed a four-set Gaudry-Schost algorithm with heuristic average case expected running time of $(1.661+o(1))\sqrt{N}$. Later, modifying the Gaudry-Schost algorithm and using a variant of the birthday paradox, Galbraith and Ruprai \cite{galbraithpkc} proposed an improvement in groups in which inversions are faster than general group operations, as in elliptic curves. This algorithm will be referred to as the accelerated Gaudry-Schost algorithm, and has a heuristic average case expected running time of approximately $1.36\sqrt N$ group operations.

In the analyses of all such algorithms the quantity used to compare the running times is the expectation of the tame-wild collision time, averaged over all problem instances. However, in the light of the above theorems, the asymptotic distribution of the running time of all such algorithms can be obtained, under the assumption that the pseudorandom walks performed by the algorithms are sufficiently random and their running times can be analyzed by an idealized birthday problem involving the tame-wild collision. This is derived for the  Gaudry-Schost algorithm (Theorem \ref{th:gaudryschost}) and the accelerated Gaudry-Schost algorithm of Galbraith and Ruprai (Theorem \ref{th:dlpdistribution}). To the best of our knowledge, these are the first known results about the limiting distributions of the running times of these algorithms. Though these results are based on some heuristic assumptions, they give considerable insight about the dependence between the running times and the complexity of the problem instance. 

\subsection{Organization of the Paper} The paper is organized as follows: 
The proofs of Theorem \ref{th:bipartitefirstrepeat} and Theorem \ref{th:bipartitejointrepeat}, and examples are given in Section \ref{sec:limitcollision}. An  analogous limit theorem for the $m$-fold collision time is proved in Section \ref{sec:mfold}. In Section \ref{sec:qcolor} the generalized urn model is considered and the proof of Theorem \ref{th:qcolorurn} is presented. The asymptotic distributions of the running times of algorithms for the discrete logarithm problem are proved in Section \ref{sec:dlp}. The limiting distributions of the collision times for the preferential attachment model and the infinite path are derived in Section \ref{sec:steinlimit}. 

\section{Proofs of Theorems \ref{th:bipartitefirstrepeat} and \ref{th:bipartitejointrepeat}}
\label{sec:limitcollision}

In this section limiting distributions of collision times in the urn model described in Definition~\ref{defn:uniform} are derived. 

\subsection{Proof of Theorem \ref{th:bipartitefirstrepeat}} Let $\cP$ be a homogeneous Poisson process on $\cR:=[0, \infty )\times[0, 1]$ of rate 1 per unit area, with points $\{(S_1, W_1),$ $(S_2, W_2), \ldots \}$, where $0 < S_1 < S_2 < $ are the points of a homogeneous Poisson process on $[0, \infty)$ of rate 1 per unit length, and $W_1, W_2, \ldots $ are i.i.d. $\dU(0, 1)$. Let $\cR_t=[0, t]\times[0, 1]$ and $\cP(t)$ be the restriction of $\cP$ to $\cR_t$.


\begin{itemize}

\item Color the points in $\cP$ independently with one of $q$ colors, $\{1, 2, \ldots, q\}$ with probability $1/q$, that is, $$\P((S_i, W_i)\in \cP \text{ has color } a\in [q])= 1/q,$$ independently for every point in $\cP$. For $a\in [q]$ denote by $\cP^a$ the subsets of $\cP$ colored $a\in [q]$. By the marking theorem \cite{poisson}, $\cP^1, \cP^2, \ldots, \cP^q$ are independent Poisson process  each with rate $1/q$ on $\cR$.
 
\item  For $N\geq 1$, partition $[0, 1]$ into intervals $J_{N 1}, J_{N 2}, \ldots, $ such that the length of $J_{N i}$ is $p_{N i}$  (see Figure~\ref{fig:embedding}). For $t\geq 0$ and $a\in [q]$, let 
\begin{equation}
\cP_{N i}=\cP\cap [0, \infty)\times J_{N i} \quad \text{and} \quad \cP^a_{N i}=\cP^a\cap [0, \infty)\times J_{N i}.
\label{eq:colorstrip}
\end{equation}
Clearly, $\cP_{N 1}, \cP_{N 2}, \ldots$ are independent Poisson processes with rates $p_{N 1}, p_{N 2}, \ldots$, respectively; and for $a\in [q]$, $\cP_{N 1}^a, \cP_{N 2}^a, \ldots$ are independent Poisson processes with rates $p_{N 1}/q, p_{N 2}/q, \ldots$, respectively.

\end{itemize}

\begin{figure*}[h]
\centering
\begin{minipage}[c]{1.0\textwidth}
\centering
\includegraphics[width=4.25in]
    {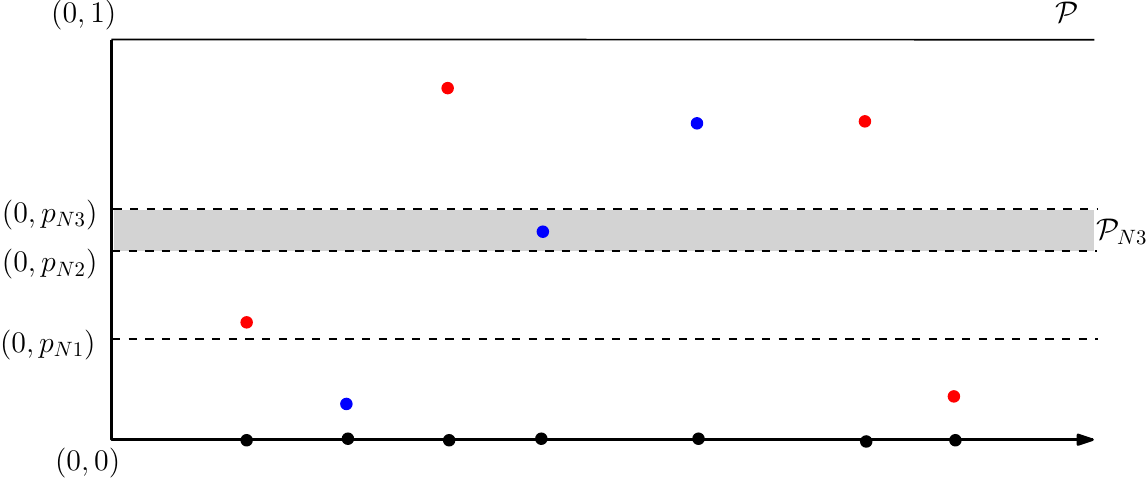}\\
\end{minipage}
\caption{\small{A schematic of the Poisson embedding for $q=2$ colors. Points are colored red or blue  with probability $\frac{1}{2}$.}}
\label{fig:embedding}
\end{figure*}


The collision time $T_{Nm}$ (Definition \ref{mthcollision}) can be described in terms of the above process: let $C_{j}$ be the color of the point $(S_j, W_j)$ and $Z_{Nj}=\sum_{i} i \pmb 1\{ W_j\in J_{Ni}\}$. The sequence $\{(C_j, Z_{Nj})\}_{j \geq 1}$ in the discrete time model corresponds to the color of the $j$-th ball and the urn to which the $j$-th ball is assigned. For $j \in \N$,  define $$K_j:=\pmb 1\{\exists n \in \N \text{ with } Z_{Nj}=Z_{Nj'}=n \text{ and } C_{j'}\ne C_{j}, \text{ for some } j'<j\},$$ the indicator that there is a collision at the $j$-th step. The $m$-th collision time (recall Definition~\ref{mthcollision}) is defined as
\begin{equation}
T_{Nm}\stackrel{D}=\inf\left\{j \in \N: \sum_{i=1}^j K_i=m\right\}.
\label{eq:TNm}
\end{equation} 
In particular, the first collision time $T_{N1}$ (defined as in Definition~\ref{defn:uniform}) is 
$$\inf\{j \in \N: \exists n \in \N \text{ with } Z_{Nj}=Z_{Nj'}=n \text{ and } C_{j'}\ne C_{j}, \text{ for some } j'<j \}.$$

\begin{lem}Let $\tau_{N m}=\inf \{t\in \R: |\cP(t)|\geq T_{N m}\}$, where $T_{N m}$ is as defined in~(\ref{eq:TNm}). Then $\frac{T_{N m}}{\tau_{N m}}\pto 1$, whenever $p_{N 1}\rightarrow 0$, as $N \rightarrow \infty$.
\label{lm:continuoustodiscrete}
\end{lem}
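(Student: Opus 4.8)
The plan is to transfer the strong law of large numbers for the embedding Poisson process $N$ to the random times $\tau_{nm}$. Since $N$ increases by unit jumps at the distinct points $S_0 < S_1 < \cdots$ and is right-continuous, the definition $\tau_{nm} = \inf\{t : N(t) > T_{nm}\}$ yields the exact identity $N(\tau_{nm}) = T_{nm} + 1$ (for $n$ large, $p_{n1} < 1$, so collisions occur infinitely often and $T_{nm}, \tau_{nm}$ are almost surely finite). Hence $T_{nm}/\tau_{nm} = N(\tau_{nm})/\tau_{nm} - 1/\tau_{nm}$. By the strong law for the unit-rate Poisson process, $N(t)/t \to 1$ almost surely; in uniform form, for each $\varepsilon > 0$ the events $G_M := \{\, |N(t)/t - 1| \le \varepsilon \text{ for all } t \ge M \,\}$ satisfy $\mathbb{P}(G_M) \uparrow 1$ as $M \to \infty$, and on $G_M \cap \{\tau_{nm} \ge M\}$ one has $|T_{nm}/\tau_{nm} - 1| \le \varepsilon + 1/M$. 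It therefore suffices to show $\tau_{nm} \pto \infty$, i.e.\ $\mathbb{P}(\tau_{nm} < M) \to 0$ for every fixed $M$.

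For this, note $\{\tau_{nm} < M\} \subseteq \{N(M) > T_{nm}\}$, and the latter entails that at least one collision has occurred among the first $N(M)$ balls. Conditionally on $N(M) = K$ these balls are i.i.d.\ with the prescribed urn and color laws, and any two of them land in the same urn with probability $\sum_i p_{ni}^2 = s_n^2$, so a union bound over the $\binom{K}{2}$ pairs bounds the conditional collision probability by $\binom{K}{2} s_n^2$. Averaging over $N(M) \sim \mathrm{Poisson}(M)$ and using $\mathbb{E}\binom{N(M)}{2} = \tfrac12 M^2$ gives $\mathbb{P}(\tau_{nm} < M) \le \tfrac12 M^2 s_n^2$; and since $s_n^2 = \sum_i p_{ni}^2 \le p_{n1}\sum_i p_{ni} = p_{n1} \to 0$, this vanishes. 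Combining: fix $\delta, \eta > 0$, take $\varepsilon = \delta/2$ above, and choose $M$ so large that $1/M \le \delta/2$ and $\mathbb{P}(G_M) > 1 - \eta$; then $\{|T_{nm}/\tau_{nm} - 1| > \delta\} \subseteq G_M^c \cup \{\tau_{nm} < M\}$, so $\mathbb{P}(|T_{nm}/\tau_{nm} - 1| > \delta) \le \eta + \tfrac12 M^2 s_n^2 \to \eta$ as $n \to \infty$. Since $\eta > 0$ is arbitrary, $\mathbb{P}(|T_{nm}/\tau_{nm} - 1| > \delta) \to 0$, which is the assertion.

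The lemma is essentially routine; the only use of the hypothesis $p_{n1} \to 0$ is the bound $s_n^2 \le p_{n1}$, and the pairwise collision count is elementary. The one point that deserves care is that $\tau_{nm}$ is a functional of the very same process $N$, so one cannot apply the ordinary law of large numbers ``at an independent random time''; it is the uniform statement $\mathbb{P}(\sup_{t \ge M} |N(t)/t - 1| \le \varepsilon) \to 1$ that is required. Pinning down the precise identity $N(\tau_{nm}) = T_{nm} + 1$ is the only bookkeeping subtlety, and an off-by-one there would be asymptotically harmless in any case.
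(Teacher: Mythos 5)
Your proof is correct, and it follows the same overall skeleton as the paper's (Poisson embedding, strong law for $N$, reduction to showing the continuous collision time diverges), but the key step $\tau_{nm}\pto\infty$ is handled by a genuinely different argument. The paper first computes the exact survival function $\P_n(\tau_{n1}>t)=e^{-\frac{(q-1)}{q}t}\prod_i\bigl(q-(q-1)e^{-p_{ni}t/q}\bigr)$ from the independent thinned Poisson processes $N_{ni,a}$, and then bounds $|\log \P_n(\tau_{n1}>t)|\le \tfrac12\frac{q-1}{q}t^2\sum_i p_{ni}^2\le \tfrac12\frac{q-1}{q}t^2 p_{n1}\to 0$ via the elementary inequality $\log(q-(q-1)e^{-x/q})\ge \frac{q-1}{q}(x-x^2/2)$; this exact product is not wasted effort, since it is precisely the formula (2) reused in the proof of Theorem 2. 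You instead use a first-moment/union bound: conditioning on $N(M)=K$, the probability that some pair of the first $K$ balls shares an urn is at most $\binom{K}{2}s_n^2$, and averaging over $N(M)\sim\mathrm{Poisson}(M)$ gives $\P(\tau_{nm}<M)\le\tfrac12 M^2 s_n^2\le\tfrac12 M^2 p_{n1}\to 0$. Your route is more elementary (it never needs the explicit product or the log inequality, and it does not even need the reduction $\tau_{nm}\ge\tau_{n1}$ in the same way), at the cost of not producing the survival-function formula the paper needs later. You are also more careful than the paper on the one genuinely delicate point: since $\tau_{nm}$ is a functional of $N$ itself, the pointwise SLLN must be upgraded to the uniform statement $\P(\sup_{t\ge M}|N(t)/t-1|\le\varepsilon)\to 1$ before it can be evaluated at the random time; the paper glosses over this, and your $G_M$ argument fills that gap cleanly. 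The off-by-one bookkeeping ($N(\tau_{nm})=T_{nm}+1$ versus the paper's $N(\tau_{nm}^-)=T_{nm}$) is consistent and immaterial.
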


\begin{proof} By the strong law of large numbers $|\cP(t)|/t$ converges almost surely to 1 as $t \rightarrow \infty$. Therefore, it suffices to show $\tau_{N 1}$ converges in probability to infinity as $N \rightarrow \infty$, since by definition $|\cP(\tau_{N 1})|=T_{N 1}$. This implies $\tau_{N m}$ converges in probability to infinity, as $\tau_{N m}\geq \tau_{N 1}$, for $m \geq 1$.

By definition $\tau_{N 1}$ is 
$$\inf \{t\geq 0: \exists j\in \mathbb N \text{ with } |\cP_{N j}^\alpha(t)|>0, ~|\cP_{N j}^\beta(t)|>0, \text{ for some } \alpha\ne \beta \in [q]\},$$
where $ \cP_{N j}^a(t)$ is the restriction of $\cP_{N j}^a$ (defined in~(\ref{eq:colorstrip})) to $\cR_t$, for $a\in [q]$. This implies that 
\begin{eqnarray}
\P(\tau_{N 1}> t)&=&\prod_i \left(q\left(1-e^{-p_{N i}\frac{t}{q}}\right)e^{-(q-1)p_{N i}\frac{t}{q}}+e^{-p_{N i}t}\right)\nonumber\\
&=&e^{-\left(\frac{q-1}{q}\right) t}\prod_i \left(q-(q-1)e^{-p_{N i}\frac{t}{q}}\right),
\label{eq:lmcontinuous}
\label{eq:tN1}
\end{eqnarray}
Using $\log\left(q-(q-1)e^{-\frac{x}{q}}\right)\geq  \frac{q-1}{q}\left(x-\frac{x^2}{2}\right)$, for $x\geq 0$, (\ref{eq:lmcontinuous}) simplifies to 
$$|\log \P(\tau_{N 1}> t)|\leq \frac{1}{2}\left(\frac{q-1}{q}\right)t^2\sum_{i}p_{N i}^2\leq \frac{1}{2}\left(\frac{q-1}{q}\right)t^2 p_{N 1}\sum_{i}p_{N i}\rightarrow 0,$$
and the result follows. 
\end{proof}

Let $p_N=(p_{N 1}, p_{N 2}, \ldots)$ be the vector of probabilities. By the above lemma, to get the limiting distribution of $T_{N 1}$ it suffices to derive the limiting distribution of $\tau_{N 1}$. From~(\ref{eq:lmcontinuous}), $\log(\P(\tau_{N 1}>t))=g(t, p_N)$, where 
\begin{equation}
g(r, \vec \psi):=\sum_i \log\left\{e^{-\left(\frac{q-1}{q}\right) r \psi_i} \left(q-(q-1)e^{-\psi_{i}\frac{r}{q}}\right)\right\},
\label{eq:gdefine}
\end{equation}
for $r \geq 0$ and a vector $\vec \psi=(\psi_1, \psi_2, \ldots)$.

\begin{lem}Let $\vec \psi=(\psi_1, \psi_2, \ldots)$ and $\psi_1\geq \psi_2\geq \ldots \geq 0$ and $\sum_{i}\psi_i^2<\infty$. Then there exists a constant $0<c(q)\leq 1$, depending only on $q$, such that for $r\in R:=[0, c(q)/\psi_1)$, there exists 
 $\{a_s\}_{s=3}^\infty$ non-negative constants with
\begin{equation}
g(r, \vec \psi)=-\frac{1}{2}\left(\frac{q-1}{q}\right) r^2\sum_{i=1}^\infty\psi_{i}^2+\sum_{s\geq 3} (-1)^{s+1}a_sr^s\sum_{i=1}^\infty \psi_i^s,
\label{eq:gseries}
\end{equation}
and the above series is absolutely convergent.
\label{lm:series}
\end{lem}

\begin{proof}For $c(q)$ chosen small enough, $|1-e^{-\psi_1 r}|< 1/(q-1)$, for $r\in R$.  As $\psi_i\leq \psi_1$, $|1-e^{-\psi_i r}|< 1/(q-1)$, for $r\in R$ and all $i \in \mathbb N$. Now, using the expansion of $\log(1+z)$, for $|z|<1$,
\begin{align}
g(r, &\vec \psi)\nonumber\\
=&\sum_i\left\{-\left(\frac{q-1}{q}\right)r \psi_i + \sum_{s=1}^\infty\frac{(-1)^{s+1}(q-1)^s}{s}(1-e^{-\psi_{i}\frac{r}{q}})^s\right\}\nonumber\\
=&\sum_i\left\{ -\left(\frac{q-1}{q}\right)r \psi_i+ \sum_{s=1}^\infty\frac{(-1)^{s+1}(q-1)^s}{s}\left(\sum_{x=1}^\infty(-1)^{x+1}\frac{\psi_i^xr^x}{q^x x!}\right)^s\right\}\nonumber\\
=& T_1+T_2,
\label{eq:series}
\end{align}
where $$T_1=\sum_i (q-1) \left(\sum_{x=2}^\infty(-1)^{x+1}\frac{\psi_i^xr^x}{q^x x!}\right),$$ and $$T_2=
\sum_i \sum_{s=2}^\infty\frac{(-1)^{s+1}(q-1)^s}{s}\left(\sum_{x=1}^\infty(-1)^{x+1}\frac{\psi_i^xr^x}{q^x x!}\right)^s.$$

Define $$\sS=\sum_i  \left\{ (q-1) \left(\sum_{x=2}^\infty\frac{\psi_i^xr^x}{q^x x!}\right)+
\sum_{s=2}^\infty \sum_{\gamma_1, \ldots, \gamma_s\geq 1}\frac{(q-1)^s}{s}\frac{(\psi_ir)^{\sum_{b=1}^s\gamma_b}}{q^{\sum_{b=1}^s\gamma_b} \prod_{b=1}^s\gamma_b!}\right\}.$$
To show that (\ref{eq:series}) is absolutely convergent, it suffices to show $\sS< \infty$, whenever $r\in R$ and $C:=\sum_{i}\psi_i^2<\infty$. Let $\lambda=\frac{r\psi_1}{q}$, and observe, 
\begin{eqnarray}
\sS&\leq&C(q-1)\sum_{x=2}^\infty\frac{\psi_1^{x-2}r^x}{q^x}+
\frac{C}{\psi_1^2}\sum_{s=2}^\infty (q-1)^s\sum_{\gamma_1, \ldots, \gamma_s\geq 1}\left(\frac{r\psi_1}{q}\right)^{\sum_{b=1}^s\gamma_b}\nonumber\\
&\leq&\frac{C(q-1)r^2}{q^2}\frac{1}{1-\lambda}+
\frac{C}{\psi_1^2}\sum_{s=2}^\infty (q-1)^s\left(\sum_{x=1}^\infty \lambda^x\right)^s\nonumber\\
&\leq&\frac{C(q-1)r^2}{q^2}\frac{1}{1-\lambda}+
\frac{C}{\psi_1^2}\sum_{s=0}^\infty \left(\frac{\lambda(q-1)}{1-\lambda}\right)^s< \infty,
\label{eq:seriesabsconvergent}
\end{eqnarray}
when $r\in R$.
Therefore, by expanding (\ref{eq:series}) further and interchanging the order of the summation using the absolute convergence,~(\ref{eq:gseries}) follows.
\end{proof}


\subsubsection{Completing the Proof of~\eqref{eq:qcolorsfirstrepeatth} in Theorem \ref{th:bipartitefirstrepeat}}
Let $s_{N}$ and $\psi_{N i}$ be as defined in the statement of the theorem. By Lemma 
\ref{lm:continuoustodiscrete} it suffices to obtain the limiting distribution of $\tau_{N 1}$. From (\ref{eq:lmcontinuous}) it follows
\begin{equation}
\label{prob}
\P(s_N\tau_{N 1}> r)=\prod_i e^{-\left(\frac{q-1}{q}\right) \psi_{N i} r} \left(q-(q-1)e^{-\psi_{N i}\frac{r}{q}}\right).
\end{equation}
As $\sum_{i}\psi_{N i}^2=1$ and $\lim_{N\rightarrow \infty}\psi_{N i}=\psi_i$ exists for all $i$, by Fatou's lemma, $\sum_{i}\psi_i^2<\infty$. This implies that $\lim_{i\rightarrow \infty}\psi_i=0$. Therefore, for every fixed $r>0$ and there exists $j(r), N(r)$ be such that for $N> N(r)$, $\psi_{N {j(r)}}<c(q)/r$.  Lemma \ref{lm:series} then implies
\begin{eqnarray}
\cT_N&:=&\sum_{i> j(r)} \log\left\{e^{-\left(\frac{q-1}{q}\right) \psi_{N i} r} \left(q-(q-1)e^{-\psi_{N i}\frac{r}{q}}\right)\right\}\nonumber\\
&=&-\frac{1}{2}\left(\frac{q-1}{q}\right)r^2\sum_{i> j(r)}\psi_{N i}^2+\sum_{s=3}^\infty(-1)^{s+1}a_{s}r^s\sum_{i> j(r)} \psi_{N i}^{s},
\label{eq:geqfinite}
\end{eqnarray}
where $\{a_{s}\}_{s\geq 1}$ are non-negative constants. Note that $\lim_{N \rightarrow \infty}\sum_{i > j(r)}\psi_{N i}^2=1-\sum_{i \leq j(r)}\psi_{i}^2$. Moreover, for any $s\geq 3$ and $i > j(r)$, $\psi_{N i}^{s}\leq \psi_{N {j(r)}}^{s-2}\psi_{N i}^2$ and $\sum_{i}\psi_{N i}^2=1$. Therefore, taking limit in~(\ref{eq:geqfinite}) as $N\rightarrow \infty$,
\begin{equation}
\cT_N\rightarrow-\frac{1}{2}\left(\frac{q-1}{q}\right)r^2\left(1-\sum_{i=1}^{j(r)}\psi_{i}^2\right)+ \sum_{i> j(r)}\log\left(q-(q-1)e^{-\psi_{i}\frac{r}{q}}\right).
\label{eq:geq}
\end{equation}

Moreover, as $N\rightarrow \infty$,
$$\prod_{i=1}^{j(r)}e^{-\left(\frac{q-1}{q}\right) \psi_{N i} r} \left(q-(q-1)e^{-\psi_{N i}\frac{r}{q}}\right)\rightarrow \prod_{i=1}^{j(r)} e^{-\left(\frac{q-1}{q}\right) \psi_{i} r}\left(q-(q-1)e^{-\psi_{i}\frac{r}{q}}\right),$$
which combined with (\ref{eq:geq}) gives~(\ref{eq:qcolorsfirstrepeatth}).

\subsubsection{Proof of Converse in Theorem \ref{th:bipartitefirstrepeat}}

The converse to~\eqref{eq:qcolorsfirstrepeatth} is proved using the convergence of types, and the following lemma:


\begin{lem}\label{h}
Let $\alpha> 0$ and $\vec\psi:=(\psi_i, i\geq 1)$ a non-increasing sequence of reals with $\sum_{i=1}^N \psi_i^2\leq 1$. Then $(\alpha, \vec \psi)$ can be uniquely reconstructed from the function $r\rightarrow h(\alpha r, \vec\psi)$ for $r\in [0, \infty)$, where
\begin{align}
\label{hr}
h(r, \vec \psi):= e^{-\frac{1}{2}\left(\frac{q-1}{q}\right)r^2\cdot\left(1-\sum_{i}\psi_{i}^2\right)}\prod_ie^{-\left(\frac{q-1}{q}\right) \psi_{i} r}\left(q-(q-1)e^{-\psi_{i}\frac{r}{q}}\right).
\end{align}
\end{lem}

\begin{proof}Using \eqref{eq:series},  the sequence $\alpha,~\sum_{i=1}^\infty \psi_i^3, ~\sum_{i=1}^\infty \psi_i^4, \ldots, $ can be uniquely extracted from the function $r\rightarrow h(\alpha r, \vec\psi)$. Now, let $(J_i, i\geq 0)$ be a partition of the unit interval such that the length of $J_0$ is $1-\sum_{i=1}^\infty \psi_i^2$ and the length of $J_i$ is $\psi_i^2$, for all $i\geq 1$. Define $Z :=\sum_{i=1}^\infty \psi_i \pmb 1\{ U \in J_i\}$, where $U$ is a uniform $[0, 1]$ random variable. Then $\E(Z^k)=\sum_{i=1}^\infty \psi^{k+2}$, and these moments of $Z$ uniquely determine the distribution of $Z$ on $[0, 1]$.  Finally, it is easily seen that this distribution uniquely determines the  sequence $(\psi_1, \psi_2, \ldots )$.
\end{proof}

The converse to~\eqref{eq:qcolorsfirstrepeatth} now follows using the above lemma, and by taking sub-sequential limits and an application of convergence of types (Theorem 14.2, Billingsley \cite{billingsley}). 

%


\subsection{Proof of Theorem \ref{th:bipartitejointrepeat}}
\label{sec:pfjointdist}

Let $\cP$ be a homogeneous Poisson process on  $[0, \infty)\times [0, 1]$ of rate 1 per unit area, and $\cP_{Ni}$, $\cP_{Ni}^a$ be as defined before, for $i \in \N$ and $a\in [q]$. Note that the process $\cP_{Ni}$ is a Poisson process of rate $p_{N i}$, which is the superposition of $q$ independent Poisson processes $\cP_{Ni}^1, \cP_{Ni}^2 \ldots, \cP_{Ni}^q$ each of rate $p_{N i}/q$. 

Define $$F_{Ni}=\inf\{t \geq 0: |\cP_{Ni}^a(t)|> 0 \text{ and } |\cP_{Ni}^b(t)|> 0, \text{ for some } a\ne b \in [q] \}.$$
Note that the process $\cP_{Ni}$ has points $(S_1, W_1), (S_2, W_2), \ldots$, where the inter-arrival times $S_1, S_2-S_1, \ldots$ have independent exponential distribution with mean $1/p_{N i}$, and $W_1, W_2, \ldots$ are i.i.d. $\dU(0, 1)$. Every point of $\cP_{Ni}$ is colored by a color $a\in [q]$ with probability $1/q$, and the set of points colored $a$  is the process $\cP_{Ni}^a$. Let $\cP_{Ni}^{-L_{Ni}}$ be the process $\cP_{Ni}$ obtained removing the first $L_{Ni}$ points, where $L_{Ni}=|\cP_{Ni}(F_{Ni}')|$  and  $F_{Ni}^{'}$ is the last arrival time in $\cP_{Ni}$ before $F_{Ni}$, that is, the last arrival time when all points in $\cP_{Ni}$ are marked with the same color. Note that $F_{Ni}'$ is distributed as $\sum_{j=1}^W W_{j}$, where $W_j$ are i.i.d. exponential with mean $1/p_{N i}$ and $W$ is a geometric with parameter $1/q$, that is, $$\P(W=w)=\frac{1}{q^{w-1}}\left(1-\frac{1}{q}\right),$$ for $w \geq 1$. By conditioning on $W$ and calculating the characteristic function, it follows that $\sum_{j=1}^W W_{j}$ has a exponential distribution with mean $r(q)/p_{N i}$, where $r(q)=q/(q-1)$. 

Now, let $\cP_{Ni}^{-L_{Ni}}(t):=\cP_{Ni}^{-L_{Ni}}([0, t]\times [0, 1])$, and define the counting process $X_N:=(X_N(t), t\geq 0)$ as $$X_N(t):=\sum_{i=1}^\infty |\cP_{Ni}^{-L_{Ni}}(t/s_N)|.$$ 
The above series is bounded by $|\cP(t/s_N)|$ and so it converges. Note that $(s_NT_{N1},  s_NT_{N2}, \ldots, s_NT_{Nm})$ are the arrival times of this process. As $\frac{T_{Nm}}{\tau_{Nm}}\pto 1$, for all $m \geq 1$, by the standard theory of weak convergence of point processes (Daley and Vere-Jones \cite{daleyverejones}, Theorem 9.1.VI) it is enough to show that the processes $X_N$ converge weakly to $\sM$.

The process $\cP_{Ni}(\cdot)$ is a homogeneous Poisson process of rate $p_{N i}$, with compensator $(p_{N i}r, r\geq 0)$. Thus, the process $(\cP_{Ni}(\cdot/s_N), t\geq 0)$ has compensator $(\psi_{Ni}t, t\geq 0)$ and the compensator of $\cP_{Ni}^{-L_{Ni}}(\cdot/s_N)$ is $C_{Ni}(t)=\psi_{Ni}(t-s_NF_{Ni}')_+$, where $s_NF_{Ni}'$ has a exponential distribution with mean $r(q)/\psi_{Ni}$. Consider the following three cases:
\begin{description}
\item[{\it Case} 1]$\lim_{N \rightarrow \infty}\psi_{N1}=0$. For $N, i\geq1$ let $\cF^{Ni} := (\cF^{Ni}_t, t\geq 0)$ be the natural filtration of $\cP_{Ni}(\cdot/s_N)$  and let $\cF^N$ be the smallest filtration containing $\{\cF^{Ni}: i\geq 1\}$.  Let $(C_{Ni}(t), t\geq 0)$ be the compensator of $\cP_{Ni}^{-L_{Ni}}(\cdot/s_N)$ with respect to the filtration $\cF^{Ni}$ and $(C_N(t), t\geq 0)$ the compensator of $X_N$ with
respect to $\cF^N$. Thus, $C_N(t) =\sum_iC_{Ni}(t)$. 
$$\E(C_N(t))=\sum_i\left(e^{-\psi_{Ni}\frac{t}{r(q)}}-1+\psi_{Ni}\frac{t}{r(q)}\right),$$
and
$$\Var(C_N(t))=\sum_i\left(1-e^{-2\psi_{Ni}\frac{t}{r(q)}}-2\psi_{Ni}\frac{t}{r(q)}e^{-\psi_{Ni}\frac{t}{r(q)}}\right).$$
Now, by elementary inequalities as in \cite[Lemma 11]{camarripitmanejp} it can be shown that  $\E(C_N(t))\rightarrow t^2/2r(q)^2$ and $\Var(C_N(t))\rightarrow 0$ for $t > 0$. This implies that $X_N$ converges weakly to the inhomogeneous Poisson process of rate $t/r(q)$ at time $t$, as required.

\item[{\it Case} 2]$\sum_{i}\psi_i^2< 1$. Let $(j_N \geq 1)$ be such that
$\lim_{N\rightarrow \infty}\sum_{i \leq j_N}\psi_{Ni}^2=\sum_i \psi_i^2$. Define the process $X_N^*(t):=\sum_{i> j_N} |\cP_{Ni}^{-L_{Ni}}(t/s_N)|$, and 
$X_{Ni}(t) = |\cP_{Ni}^{-L_{Ni}}(t/s_N)|$. Clearly, $X_{Ni}$ converges weakly to $B_i^{-L_i}$. Moreover, as in the previous case it can be shown that $X_N^*(s_Nt/s_N^*)$ converges weakly to the inhomogeneous Poisson process of rate $t/r(q)$ at time $t$. As $(s_N^*/s_N)^2\rightarrow 1-\sum_{i}\psi_i^2$, independence then implies that
$$(X_N^*, X_{N1}, \ldots, X_{Nj_N}, 0, 0, \ldots)\dto (B^*, B_1^{-L_1}, B_2^{-L_2}, \ldots).$$

\item[{\it Case} 3]$\sum_{i}\psi_i^2=1$. Let $(j_N \geq 1)$ be a sequence with $\lim_{N\rightarrow \infty}\sum_{i \leq j_N}\psi_{Ni}^2=1$. 
Define the process $X_N^*(t)$, and $X_{Ni}(t)$ as before. Clearly, $X_{Ni}$ converges weakly to $B_i^{-L_i}$. Now, it is easy to show that the compensator $C_N^*(t)$ of $X_N^*(s_Nt/s_N^*)$ satisfies: $\E(C_N^*(t))\rightarrow 0$ and $\Var(C_N^*(t))\rightarrow 0$, and the result follows.
\end{description}

\subsubsection{Corollary for the Uniform Case} Under the uniform distribution, that is, $p_{N i}=1/N$, for $i\in\{1, 2, \ldots, N\}$, the limiting distribution of the first collision time $T_{N1}$ is a Rayleigh distribution.

\begin{cor}Suppose there are $q\geq 2$ colors and $p_{N i}=1/N$, for $i \in[N]$, then the distribution of $T_{N1}/\sqrt{N}$ is the Rayleigh distribution with parameter $\sqrt{1-1/q}$, that is, 
$$\lim_{N\rightarrow \infty}\P(T_{N1}/\sqrt{N}> r)=e^{-\frac{1}{2}\left(\frac{q-1}{q}\right)r^2}.$$
Moreover, the distribution of $T_{Nm}/\sqrt{N}$ converges to $\sqrt{\frac{q}{q-1}\cdot \chi^2_{2m}}$, where $\chi^2_{2m}$ is a Chi-squared distribution with $2m$ degrees of freedom.
\label{cor:uniform}
\end{cor}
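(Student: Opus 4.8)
The plan is to specialize Theorems \ref{th:bipartitefirstrepeat} and \ref{th:bipartitejointrepeat} to the uniform probabilities $p_{ni}=1/n$ for $i\in\{1,\dots,n\}$ (and $p_{ni}=0$ otherwise), for which every limiting quantity degenerates and the infinite product over $i$ collapses. First I would record the constants: $s_n^2=\sum_i p_{ni}^2=n\cdot n^{-2}=n^{-1}$, so $s_n=n^{-1/2}$ and $s_nT_{nm}=T_{nm}/\sqrt n$; and $\psi_{ni}=p_{ni}/s_n=n^{-1/2}$ for $i\le n$, so for each fixed $i$ the limit $\psi_i=\lim_n\psi_{ni}$ equals $0$ and hence $\sum_i\psi_i^2=0$. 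Since $p_{n1}=1/n\to 0$, the hypotheses of both theorems hold. Substituting $\psi_i=0$ into (\ref{eq:qcolorsfirstrepeatth}) makes $1-\sum_i\psi_i^2=1$, each exponential factor $e^{-\frac{q-1}{q}\psi_i r}$ equal $1$, and each remaining factor equal $q-(q-1)e^{0}=1$, so the product is $1$ and $\lim_{n\to\infty}\P_n(T_{n1}/\sqrt n>r)=e^{-\frac12\left(\frac{q-1}{q}\right)r^2}$, which is exactly the survival function of the Rayleigh law with parameter $\sqrt{1-1/q}$ in the normalization of the Introduction (parameter $1$ corresponding to survival function $e^{-r^2/2}$).

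For the joint statement I would identify the limit process $\sM$ of Theorem \ref{th:bipartitejointrepeat}. With $\psi_i=0$ for every $i$, each $B_i$ is a superposition of $q$ Poisson processes of rate $\psi_i/q=0$, hence the null process, so $L_i=0$ and $B_i^{-L_i}$ contributes nothing; therefore $\sM=B^*$, the inhomogeneous Poisson process of rate $(1-\sum_i\psi_i^2)(1-\tfrac1q)t=\tfrac{q-1}{q}t$ at time $t$. Its cumulative intensity is $\Lambda(t)=\int_0^t\tfrac{q-1}{q}s\,ds=\tfrac{q-1}{2q}t^2$, so the time change $u=\Lambda(t)$ carries $B^*$ to a homogeneous rate-$1$ Poisson process; hence its $m$-th arrival time $\eta_m$ satisfies $\Lambda(\eta_m)\stackrel{d}{=}\Gamma(m,1)$ (Gamma with shape $m$ and rate $1$), i.e.\ $\tfrac{q-1}{2q}\eta_m^2\stackrel{d}{=}\tfrac12\chi^2_{2m}$ after the identity $\Gamma(m,1)\stackrel{d}{=}\tfrac12\chi^2_{2m}$, so $\eta_m\stackrel{d}{=}\sqrt{\tfrac{q}{q-1}\chi^2_{2m}}$. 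Since $s_nT_{nm}=T_{nm}/\sqrt n\dto\eta_m$ by Theorem \ref{th:bipartitejointrepeat}, this yields the second assertion (and the first is the case $m=1$, where $\chi^2_2$ is Exponential with mean $2$).

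The computation presents essentially no obstacle. The one point worth a sentence of care is that, even though $\sum_i\psi_{ni}^2=1$ for every $n$, the pointwise limits $\psi_i$ all vanish (all the mass escapes to infinity): thus in (\ref{eq:qcolorsfirstrepeatth}) the product over any fixed finite collection of atoms degenerates to $1$ and the whole limiting law is carried by the quadratic term $-\tfrac12\tfrac{q-1}{q}r^2\bigl(1-\sum_i\psi_i^2\bigr)=-\tfrac12\tfrac{q-1}{q}r^2$, which is exactly what the proof of Theorem \ref{th:bipartitefirstrepeat} already accounts for via its splitting at $j_r$. The other is that the identity $\Gamma(m,1)\stackrel{d}{=}\tfrac12\chi^2_{2m}$ must be applied in the correct direction when inverting $\Lambda$. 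Nothing beyond Theorems \ref{th:bipartitefirstrepeat} and \ref{th:bipartitejointrepeat} is needed.
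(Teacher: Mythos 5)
Your proposal is correct and follows essentially the same route as the paper: specialize Theorem \ref{th:bipartitefirstrepeat} (all $\psi_i=0$, so only the Gaussian factor survives) for the first claim, and identify the limit process in Theorem \ref{th:bipartitejointrepeat} as the inhomogeneous Poisson process $B^*$ of rate $t/r_q$ for the second. The only cosmetic difference is in the last step: the paper reads off the $m$-th arrival law from $\P(T_m<t)=\P(N(t)\ge m)$ with $N(t)\sim\mathrm{Poisson}(t^2/2r_q)$, whereas you use the equivalent time-change identity $\Lambda(\eta_m)\stackrel{d}{=}\Gamma(m,1)=\tfrac12\chi^2_{2m}$.
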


\begin{proof}The limiting distribution of $T_{N1}/\sqrt{N}$ follows directly from Theorem \ref{th:bipartitefirstrepeat}. Theorem \ref{th:bipartitejointrepeat} implies that the limiting distribution of $T_{Nm}/\sqrt{N}$ is the time of the $m$-th arrival of an inhomogeneous Poisson process $\cP_{\lambda}$ with rate $\lambda(t)=t/r(q)$, where $r(q)=q/(q-1)$. Denote the $m$-th arrival time by $\kappa_m$, and $|\cP_\lambda(t)|$ the number of arrivals in $\cP_{\lambda}$ up to time $t$. Using $|\cP_\lambda(t)| \sim \dPois(t^2/2r(q))$ and $\P(\kappa_m < t)=\P(|\cP_\lambda(t)| \geq m)$, the result follows.
\end{proof}


\subsection{Examples}
In this section connections of Theorem \ref{th:bipartitefirstrepeat} to the famous birthday problem are discussed. Other examples involving non-uniform urn selection probabilities are also given, illustrating the generality of the above results. 

\begin{example}(Birthday Problem) The classical birthday problem asks for the minimum number of people in a room such that two of them have the same birthday with probability at least 50\%. It is well known that the minimum number people for which this holds is approximately 23. In fact, the expected number of samples, chosen uniformly with replacement, required from a set of size $N$ until some value is repeated, is asymptotically $\sqrt{\pi N/2}$.  A generalization of this considers birthday coincidences among individuals of different types, that is, in a room with equal numbers of boys and girls, when can one expect a boy and girl to share the same birthday. Finding matches among different types can also be stated in terms of sampling colored balls and placing them in urns: Suppose there are $N$ urns and two colors, and the balls are colored independently with probability 1/2 and placed in the urns uniformly with probability $1/N$. The number of draws needed to have 2 balls with different colors in the same urn is the first time when a boy and a girl share the same birthday, when boys and girls sequentially enter a room independently with probability 1/2. In this case, Corollary \ref{cor:uniform} for $q=2$ shows that the limiting distribution of the first collision time is a Rayleigh distribution with parameter $\sqrt{1/2}$. This implies that the expected time of the first collision is $\sqrt{\pi N}$. When $N=365$ and the birthdays are assumed to be uniformly distributed over the year, the expected time before there is a boy and a girl with the same birthday is 34. 
For a detailed discussion on the birthday problem and its various generalizations and applications, refer to \cite{aldous,barbourholstjanson,dasguptasurvey,diaconisholmes,diaconismosteller} and the references therein.\end{example}


The following two examples exhibit the range of distributions that can be obtained from Theorem \ref{th:bipartitefirstrepeat} when the urn selection distribution is non-uniform.

%
%

%

\begin{example}\label{exnu1} Consider the probability distribution
\begin{equation}
p_{N1}=\frac{1}{\sqrt{N}}, \quad \text{ and } \quad p_{N i}=\frac{c_N}{N}, \text{ for } i \in [2,  N+1],
\label{eq:distributioN sqrt}
\end{equation}
where is $c_N=1-\frac{1}{\sqrt{N}}$, is such that $\sum_{i}p_{N i}=1$. Note that $c_N\rightarrow 1$, as $N \rightarrow \infty$, and $\psi_1=1/\sqrt 2$ and $\psi_i=0$ for all $i \in [2, N+1]$.
Therefore, by Theorem \ref{th:bipartitefirstrepeat}, 
$$\lim_{N\rightarrow \infty}\P\left(s_NT_{N1} > r\right) = e^{-\frac{1}{4}\left(\frac{q-1}{q}\right)r^2} e^{-\left(\frac{q-1}{q}\right) \frac{r}{\sqrt 2}}\left(q-(q-1)e^{-\frac{r}{q\sqrt 2}}\right).$$
Note that in this case $\sum_{i=1}^\infty \psi_i^2<1$, so in~(\ref{eq:qcolorsfirstrepeatth}) both the exponential term outside the product, and the terms inside the product are non-vanishing. 
For $q=2$ the limiting distribution has the following simpler form, 
\begin{equation*}
\lim_{N\rightarrow \infty}\P\left(s_NT_{N1} > r\right) =e^{-\frac{r^2}{8}}\left(2e^{-\frac{r}{2\sqrt 2}}-e^{-\frac{r}{\sqrt 2}}\right).
\end{equation*}
\end{example}

\begin{example}\label{exnu2}
Consider the following non-uniform urn selection distribution
\begin{equation}
p_{N1}=\frac{1}{\log{N}}, \quad \text{ and } \quad p_{N i}=\frac{c_N}{N}, \text{ for } i \in [2,  N+1],
\label{eq:distributioN log}
\end{equation}
where is $c_N=1-\frac{1}{\log N}$, is such that $\sum_{i}p_{N i}=1$. Note that $c_N\rightarrow 1$, as $N \rightarrow \infty$, and in this case $\psi_1=1$ and $\psi_i=0$ for all $i \in [2, N+1]$.
Therefore, 
$$\lim_{N\rightarrow \infty}\P\left(s_NT_{N1} > r\right) =  e^{-\frac{(q-1)}{q}r}\left(q-(q-1)e^{-\frac{r}{q}}\right).$$
Note that in this case $\sum_{i=1}^\infty \psi_i^2=1$, and so the exponential term in~(\ref{eq:qcolorsfirstrepeatth}) outside the product vanishes. For $q=2$ the limiting distribution simplifies to
$$\lim_{N\rightarrow \infty}\P\left(s_NT_{N1} > r\right) =2e^{-\frac{1}{2}\cdot r}-e^{-r}.$$
\end{example}

\section{Limiting Distributions of $m$-Fold Collision Times} 
\label{sec:mfold}

Recall the urn model in Definition~\ref{defn:uniform}, and analogous to the first collision time $T_{N1}$, define the $m$-{\it fold collision time} 
$T_{N1, m}$ as the first time there exists an urn with $m$ balls of color $a$  and $m$ balls of color $b$, for some $a\ne b \in [q]$. 

The next theorem gives the asymptotic distribution of $T_{N1, m}$. Calculations are similar to those in the proof of Theorem \ref{th:bipartitefirstrepeat}, and some details are omitted.

\begin{thm}Let $m\geq 1$ be a fixed postive integer, and $$s^{(2m)}_{N}=\left(\sum_i p_{N i}^{2m}\right)^{\frac{1}{2m}}, \text{ and } \psi_{Ni}^{(2m)}=\frac{p_{N i}}{s^{(2m)}_{N}}.$$ Suppose $\lim_{N\rightarrow \infty}p_{N1}=0$, and $\psi_i^{(2m)}:=\lim_{N\rightarrow \infty}\psi_{Ni}^{(2m)}$ exist for each $i \in \mathbb N$. Then for $r \geq 0$, \begin{align}
\lim_{N \rightarrow \infty}\P(&s^{(2m)}_{N}T_{N1, m}>r)\nonumber\\
=&e^{-\beta_m r^{2m}}\prod_i \left\{h_m\left(\frac{\psi_{i}^{(2m)}r}{q}\right)^{q-1}\left[q-(q-1)h_m\left(\frac{\psi_{i}^{(2m)}r}{q}\right)\right]\right\},
\label{eq:bipartitemlimitdistribution}
\end{align}
where $\beta_m=\left(1-\sum_{i=1}^\infty \left(\psi_i^{(2m)}\right)^{2m}\right)\frac{(q-1)}{2q^{2m-1}(m!)^q}$ and $h_m(x)=\sum_{y=0}^{m-1}e^{-x}\frac{x^y}{y!}$.
\label{th:bipartitemrepeat}
\end{thm}

\begin{proof}We consider the same embedding of the process as before: let $\cP$ be a homogeneous Poisson process on  $[0, \infty)\times [0, 1]$ of rate 1 per unit area, and  $\cP_{Ni}$ and $\cP_{Ni}^a$ be as defined in~(\ref{eq:colorstrip}). 
The $m$-fold collision time can be defined as in~(\ref{eq:TNm}) in terms of continuous-time process, and let $\tau_{N1, m}=\inf \{t: |\cP(t)|\geq  T_{N1, m}\}$.  By the strong law of large numbers, $|\cP(t)|/t$ converges almost surely to 1 as $t \rightarrow \infty$. As $\tau_{N1, m}\geq \tau_{N1}$, for $m \geq 1$, by Lemma \ref{lm:continuoustodiscrete}, $\lim_{N\rightarrow \infty}\tau_{N1, m}=\infty$ whenever $p_{N1}\rightarrow 0$. This implies $\frac{T_{N1, m}}{\tau_{N1, m}}\pto 1$, whenever $p_{N1}\rightarrow 0$, as $|\cP(\tau_{N1, m})|=T_{N1, m}$. 

Therefore, it suffices to derive the asymptotic distribution of $\tau_{N1, m}$. By definition, $\tau_{N1, m}$ is $$\inf \{t\geq 0: \exists j\in \mathbb N \text{ with } |\cP_{Nj}^\alpha(t)|\geq m\text{ and } |\cP_{Nj}^\beta(t)|\geq m, \text{ for } \alpha\ne \beta \in [q]\}.$$ 
This implies 
\begin{align}
\P(\tau_{N1, m}> r)=&\prod_{i}\left[q\cdot h_m\left(\frac{p_{N i}r}{q}\right)^{q-1}\left(1-h_m\left(\frac{p_{N i}r}{q}\right)\right)+h_m\left(\frac{p_{N i}r}{q}\right) ^{q}\right]\nonumber\\
=&\prod_i \left\{h_m\left(\frac{p_{N i}r}{q}\right)^{q-1}\left[q-(q-1)h_m\left(\frac{p_{N i}r}{q}\right)\right]\right\},
\label{eq:tN11}
\end{align}
where $h_m(x)=\sum_{y=0}^{m-1}e^{-x}\frac{x^y}{y!}$. Note that 
\begin{align}
(q-1)\log  h_m(x)&+\log(q-(q-1)h_m(x))\nonumber\\
=&(q-1)\log (1-(1-h_m(x)))+\log(1+(q-1)(1-h_m(x)))\nonumber\\
=&-\sum_{k=2}^\infty\frac{(q-1)+(-1)^k(q-1)^k}{k}\cdot\left(1- h_m(x)\right)^{k}\nonumber\\
=&-\sum_{k=2}^\infty\frac{(q-1)+(-1)^k(q-1)^k}{k}\cdot\left(\sum_{y=m}^{\infty}e^{-x}\frac{(x)^y}{y!}\right)^{k}\nonumber\\
=&-\frac{q(q-1)}{2}\cdot\frac{x^{2m}}{(m!)^2}+\sum_{k=2m+1}^\infty (-1)^{k+1}a_kx^k.
\label{eq:tN2}
\end{align}
The interchange of the different summations is justified by the absolute convergence of the series, which can be proved by  arguments similar to those in Lemma \ref{lm:series}. Combining~(\ref{eq:tN11}) and~(\ref{eq:tN2}) we get
\begin{align}
\log \P(s^{(2m)}_{N}& \tau_{N1, m}> r)\nonumber\\
=&-\frac{q(q-1)}{2}\cdot\frac{r^{2m}}{q^{2m}(m!)^q}+\sum_{k=2m+1}^\infty (-1)^{k+1}a_kr^kq^{-k}\sum_i\left(\psi_{Ni}^{(2m)}\right)^k,\nonumber
\end{align}
since $\sum_i\left(\psi_{Ni}^{(2m)}\right)^{2m}=1$. Finally, $\left(\psi^{(2m)}_{Ni}\right)^k\rightarrow \left(\psi_i^{(2m)}\right)^k$ for $k > 2m$, and using absolute convergence,  $\lim_{N\rightarrow \infty}\log \P(s^{(2m)}_{N}\tau_{N1, m}> t)$ simplifies to
$$-\beta_m t^{2m}+\sum_i \left\{h_m\left(\frac{\psi_{i}^{(2m)}t}{q}\right)^{q-1}\left[q-(q-1)h_m\left(\frac{\psi_{i}^{(2m)}t}{q}\right)\right]\right\},\nonumber$$ 
where $\beta_m$ is as defined~\ref{eq:bipartitemlimitdistribution}. This completes the proof of the result.
\end{proof}

\section{Generalizing the Urn Model: Proof of Theorem \ref{th:qcolorurn}}
\label{sec:qcolor}

The proof of Theorem \ref{th:qcolorurn} presented below is similar to that of Theorem \ref{th:bipartitefirstrepeat} but requires more careful calculations. To this end, recall the urn model from~Definition \ref{defn:nonuniform} with non-uniform color and non-uniform urn selection probabilities. As before, the collision time $T_{N1}$ is  the first time that there exist two balls with different colors in the same urn.

\subsection{Proof of Theorem \ref{th:qcolorurn}}

Let $\P_N$ be a ranked discrete distribution and $\pmb c=(c_1, c_2, \ldots, c_q)$ be the coloring distribution as in Definition~\ref{defn:nonuniform}. Let $\cP$ be a homogeneous Poisson process on $\sS:=[0, \infty )\times[0, 1]$ of rate 1 per unit area, with points $\{(S_{1}, W_{1}), (S_{2}, W_{2}), \ldots \}$, where $0 < S_{1} < S_{2} <\ldots $ are the points of a homogeneous Poisson process on $[0, \infty)$ of rate 1 per unit length, and $W_1, W_2, \ldots $ are i.i.d. $\dU(0, 1)$. Let $\cR_t=[0, t]\times[0, 1]$ and $\cP(t)$ be the restriction of $\cP$ to $\cR_t$.

\begin{itemize}

\item Color the points in $\cP$ independently with one of $q$ colors, $\{1, 2, \ldots, q\}$ as follows: $$\P((S_i, W_i)\in \cP \text{ has color } a\in [q])= c_a,$$ independently over the points in $\cP$. For $a\in [q]$ denote by $\cP^a$ the subsets of $\cP$ colored $a\in [q]$. By the marking theorem \cite{poisson}, $\cP^1, \cP^2, \ldots, \cP^q$ are independent Poisson process with rates $c_a$ on $\cR$, respectively.
 
\item  For each $a\in [q]$ and $N\geq 1$, partition $[0, 1]$ into intervals $J_{N 1, a},$ $J_{N 2, a}, \ldots, $ such that the length of $J_{N i, a}$ is $p_{N i, a}$. For $t\geq 0$, let 
\begin{equation}
\cP^a_{N i}=\cP^a\cap [0, \infty)\times J_{N i, a}.
\label{eq:colorstripgen}
\end{equation}
Clearly, $\cP_{N 1}^a, \cP_{N 2}^a, \ldots$ are independent Poisson processes with rates $c_a p_{N 1}, c_a p_{N 2}, \ldots$, respectively.
\end{itemize}

Recall the definition of the first collision time $T_{N1}$ for the urn model in Definition~\ref{defn:nonuniform}. It can be described in terms of the above process as follows: let $C_{j}$ be the color of the point $(S_j, W_j)$ and $Z_{Nj}=\sum_{i} i \pmb 1\{ W_j\in J_{Ni, C_j}\}$. The sequence $\{(C_j, Z_{Nj})\}_{j \geq 1}$ in the discrete time model corresponds to the color of the $j$-th ball and the urn to which the $j$-th ball is assigned. In particular, the first collision time $T_{N1}$ is 
$$\inf\{j \in \N: \exists n \in \N \text{ with } Z_{Nj}=Z_{Nj'}=n \text{ and } C_{j'}\ne C_{j}, \text{ for some } j'<j \}.$$


\begin{lem}Let $\tau_{N1}=\inf \{t: |\cP(t)|\geq T_{N1}\}$. Then $\frac{T_{N1}}{\tau_{N1}}\pto 1$, whenever $\lim_{N\rightarrow \infty}\max_ip_{Ni, a}=0$, for all $a\in [q]$.
\label{lm:continuoustodiscreteg}
\end{lem}

\begin{proof}
By the strong law of large numbers $|\cP(t)|/t$ converges almost surely to 1 as $t \rightarrow \infty$. Therefore, it suffices to show $\tau_{N 1}$ converges in probability to infinity as $N \rightarrow \infty$, since $|\cP(\tau_{N 1})|=T_{N 1}$. 

By definition $\tau_{N1}$ is $$\inf \{t\geq 0: \exists~ i\in \mathbb N \text{ with } |\cP_{Ni}^\alpha(t)|>0, ~|\cP_{Ni}^\beta(t)|>0, \text{ for some } \alpha\ne \beta \in [q] \},$$
where $ \cP_{N j}^a(t)$ is the restriction of $\cP_{N j}^a$ (defined in~(\ref{eq:colorstripgen})) to $\cR_t$, for $a\in [q]$.  This implies that 

\begin{eqnarray}
\P(\tau_{N1}> t)&=&\prod_i \left(\sum_{a=1}^q\left(1-e^{-p_{Ni, a}c_a t}\right)\prod_{b\ne a}e^{-p_{Ni, b}c_b t}+\prod_{a=1}^qe^{-p_{Ni, a}c_a t}\right)\nonumber\\
&=&\prod_i \left(\sum_{a=1}^q e^{-c_a t\sum_{b\ne a}p_{Ni, b}}-(q-1)e^{- t\sum_{a=1}^q p_{Ni, a}c_a}\right)\nonumber\\
&=&e^{-t \sum_i \sum_{a=1}^qc_a p_{Ni, a}} \prod_i\left(1+\sum_{a=1}^q e^{p_{Ni, a}c_a t}-q\right).
\label{eq:lmcontinuousgeneral}
\label{eq:tNgen}
\end{eqnarray}
As $\max_ip_{Ni, a}\rightarrow 0$, choose $N> N(t,a)$ so that $p_{Ni, a}< \frac{1}{c_at}\log\left(1+\frac{1}{q}\right)$ for all $i \in \N$. Therefore, for $N \geq \max_a N(t,a)$ and some constant $C>0$
\begin{align}
\log & \P(\tau_{N1}> t)\nonumber\\\
\geq&\sum_i \left\{-t \sum_{a=1}^qp_{Ni, a}c_a+\left(\sum_{a=1}^q e^{p_{Ni, a}c_at}-q\right)-\frac{1}{2}  \left(\sum_{a=1}^q e^{p_{Ni, a}c_at}-q\right)^2\right\}\nonumber\\
\geq&\frac{t^2}{2}  \sum_i \sum_{a}c_a^2 p_{Ni, a}^2-\frac{C t^2}{2}  \sum_i \left(\sum_{a}c_a p_{Ni, a}\right)^2.\nonumber
\end{align}
The first inequality uses $\log(1+x)\geq x-\frac{x^2}{2}$, for $|x|<1$, and the second uses: (a) $e^x\geq 1+x+\frac{x^2}{2}$ on the first exponential term and (b) $e^x\leq 1+Cx$, for some constant $C:=C(q)$ when $|x|\leq \log(1+1/q)$ on the second exponential term. 

Now, as $\sum_i p_{Ni, b}=1$, for all $b \in [q]$, $$\sum_i \left(\sum_{a} p_{Ni, a}\right)^2\leq  (\max_{i\in \N} \sum_{a}p_{Ni, a}) \sum_{a} \sum_i p_{Ni, a}=q\max_{i\in \N} \sum_{a}p_{Ni, a}.$$ Therefore,
\begin{eqnarray}
|\log \P(\tau_{N1}> t)|\leq \frac{C t^2}{2}  \sum_i \left(\sum_{a}c_a p_{Ni, a}\right)^2&\leq & \frac{C t^2}{2} \max_{a\in [q]}c_a^2  \sum_i \left(\sum_{a} p_{Ni, a}\right)^2\nonumber\\
&\leq & \frac{C q t^2}{2} \max_{a\in [q]}c_a^2 \max_{i\in \N}\sum_{a} p_{Ni, a}\nonumber\\
&\rightarrow & 0,\nonumber
\end{eqnarray}
and the result follows. 
\end{proof}

With $s_N=\left(\sum_i (\sum_{a=1}^q c_ap_{Ni, a})^2\right)^\frac{1}{2}$, $\psi_{Ni, a}$ as defined in the statement of the theorem, and~(\ref{eq:tNgen}) 
\begin{align}
\log\P(s_NT_{N1}> r)=&\sum_{i}\log\left\{e^{-r \sum_i \sum_{a=1}^qc_a\psi_{Ni, a}} \left(1+\sum_{a=1}^q e^{r c_a\psi_{Ni, a}}-q\right)\right\}\nonumber\\
:=&g(r, \vec \psi_{N, 1}, \ldots, \vec \psi_{N, q}),
\end{align}
where $\vec \psi_{N, a}=(\psi_{N1, a}, \psi_{N2, a}, \ldots )$, for $a\in [q]$ and the function $g$ is defined in~(\ref{eq:g}). 

As $\sum_{i}\psi_{N i, a}^2 <\infty$ and $\lim_{N\rightarrow \infty}\psi_{N i, a}=\psi_{i, a}$ exists for all $i$ and $a\in [q]$, by Fatou's lemma $\sum_{i}\psi_{i, a}^2<\infty$. Therefore, for $a\in [q]$, $\lim_{i}\psi_{i, a}=0$, and for $r>0$ there exists $N(r), j(r)$ such that $$\psi_{N j(r), a}<\frac{1}{r}\log(1+1/q) \text{ for all } N > N(r).$$ 

Let $A$ and $B$ be the functions defined in Lemma~\ref{lm:seriesq}. Define $$\Gamma=\bigcup_{k=1}^\infty\{(\gamma_1, \gamma_2, \ldots, \gamma_k)\in \mathbb N^k: \sum_{b=1}^k\gamma_b\geq 3\}.$$
For $(\gamma_1, \gamma_2, \ldots, \gamma_k)\in \Gamma$ and $i> j(r)$,
\begin{align}
\prod_{b=1}^k \sum_{a=1}^q c_a^{\gamma_b}\psi_{Ni, a}^{\gamma_b}\leq & \left(\sum_{a=1}^q c_a\psi_{Ni, a}\right)^{\sum_{b=1}^a\gamma_b}\nonumber\\
\leq & \left(\max_{a\in[q]}\max_{i>j(r)}c_a \psi_{N i, a}\right)^{\sum_{b=1}^a\gamma_b-2}\left(\sum_{a=1}^q c_a\psi_{Ni, a}\right)^2.\nonumber
\end{align}
Using this and Lemma \ref{lm:seriesq} it follows that
\begin{eqnarray}
\lim_{N\rightarrow \infty}A(r, \vec \psi_{N, a}, \vec \psi_{N, a}, \ldots, \vec \psi_{N, q})&=&A(r, \vec \psi_{1}, \vec \psi_{2}, \ldots, \vec \psi_{q}),\nonumber\\
\lim_{N\rightarrow \infty}B(r, \vec \psi_{N, a}, \vec \psi_{N, a}, \ldots, \vec \psi_{N, q})&=&B(r, \vec \psi_{1}, \vec \psi_{2}, \ldots, \vec \psi_{q}).
\label{eq:limit1}
\end{eqnarray}

Finally,  by assumption $\lim_{N\rightarrow \infty}\sum_i \psi_{Ni, a}^2=\phi_a$ exists for all $a\in [q]$, and hence, as $N\rightarrow \infty$, \begin{eqnarray}
\left(\sum_i\left(\sum_{a=1}^q c_a\psi_{Ni, a}\right)^2-\sum_{a=1}^q c_a^2\sum_i\psi_{Ni, a}^2\right)&\rightarrow&1-\sum_{a=1}^q c_a^2\left(\lim_{N \rightarrow \infty}\sum_i \psi_{Ni, a}^2\right)\nonumber\\
&=&1-\sum_{a=1}^q c_a^2\phi_a.
\label{eq:limit2}
\end{eqnarray}
Combining Equations (\ref{eq:limit1}) and (\ref{eq:limit2}) and using Lemma \ref{lm:seriesq} the result follows.


\subsection{Useful Corollaries and Examples Continued}

A special case of Theorem~\ref{th:qcolorurn} is to consider the case where  $p_{Ni, a}=p_{N i}$, for all $a\in [q]$, and 
a general coloring distribution  $\pmb c=(c_1, c_2, \ldots, c_q)$. This simplifies~(\ref{eq:qcolorsgen}) to the following:

\begin{cor}For $a\in [q]$, and $N, i \in \mathbb N$, let $s^2_{N}=\sum_{i}p_{N i}^2, \text{ and } \psi_{ni}=\frac{p_{N i}}{s_{N}}$. Suppose that $\lim_{N\rightarrow \infty}p_{N1}=0$ and $\psi_{i}=\lim_{n\rightarrow \infty}\psi_{N i}$ exists, for each $i \in \mathbb N$. Then, as $N\rightarrow \infty$,
\begin{equation*}
\P(s_{N}T_{N1}> r) \rightarrow e^{-\frac{1}{2}\left(1-\sum_{i}\psi_i^2\right)(1-\sum_{a=1}^q c_a^2)r^2}\prod_ie^{-r\psi_i} \left(1+\sum_{a=1}^q e^{\psi_{i}c_a r}-q\right).
\end{equation*}\vspace{-0.15in}
\label{cor:qcolorurn}
\end{cor}

The main application of Theorem \ref{th:qcolorurn} is in deriving the limiting distributions of the running times of algorithms for the discrete logarithm problem (DLP) in an interval. To this end, assume $q=2$ colors and consider 2 discrete ranked distributions $p_{N1}\geq p_{N2}\geq \ldots$ and $q_{N1}\geq q_{N2}\geq \ldots$; where at each  step one of the two colors is chosen with probability 1/2. If color 1 is chosen then a ball of color 1 is put in the $i$-th urn with probability $p_{N i}$, otherwise a ball of color 2 is put in the $i$-th urn with probability $q_{Ni}$. 

\begin{cor}\label{cor:2colorurn}
For $a\in [q]$, and $N, i \in \mathbb N$, let 
\begin{equation}
s_{N}=\frac{1}{2}\left(\sum_{i}\left(p_{N i}+q_{Ni}\right)^2\right)^{\frac{1}{2}}, \quad \psi_{Ni}=\frac{p_{N i}}{s_{N}}, \text{ and } ~\theta_{Ni}=\frac{q_{Ni}}{s_{N}}.
\label{eq:sn}
\end{equation}
Suppose $p_{N1}\rightarrow 0$, $q_{N1}\rightarrow 0$, as $N\rightarrow \infty$, and $\lim_{N\rightarrow \infty}\psi_{Ni}=\lim_{N\rightarrow \infty}\theta_{Ni}=0$, for all $i \in \mathbb N$, and $\phi_1=\lim_{N\rightarrow \infty}\sum_i\psi_{Ni}^2$, $\phi_2=\lim_{N\rightarrow \infty}\sum_i\theta_{Ni}^2$ exists. Then 
\begin{equation} \label{eq:qcolorsfirstrepeatcor}
\lim_{N\rightarrow \infty}\P(s_{N}T_{N1}> r)=e^{-\left(1-\frac{1}{4}\sum_{a=1}^2\phi_a\right)\frac{r^2}{2}}.
\end{equation}
\end{cor}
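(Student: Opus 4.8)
The plan is to obtain Corollary~\ref{cor:2colorurn} as a direct specialization of Theorem~\ref{th:qcolorurn}. Take $q=2$ with uniform color selection $q_1=q_2=\tfrac12$, and let the two urn-selection distributions be $p_{n1_i}=p_{ni}$ and $p_{n2_i}=q_{ni}$. Then in the notation of Theorem~\ref{th:qcolorurn} one has $s_n^2=\sum_i\big(\tfrac12 p_{ni}+\tfrac12 q_{ni}\big)^2=\tfrac14\sum_i(p_{ni}+q_{ni})^2$, which is exactly the normalization in (\ref{eq:sn}), together with $\psi_{n1_i}=p_{ni}/s_n=\psi_{ni}$ and $\psi_{n2_i}=q_{ni}/s_n=\theta_{ni}$. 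The hypotheses needed to invoke Theorem~\ref{th:qcolorurn} all follow from those assumed in the corollary: since the distributions are ranked, $\max_i p_{na_i}\to 0$ reads $p_{n1}\to 0$ and $q_{n1}\to 0$; the limits $\psi_{a_i}=\lim_n\psi_{na_i}$ exist and equal $0$ by hypothesis; and $\phi_a=\lim_n\sum_i\psi_{na_i}^2$ exist, being $\phi_1$ and $\phi_2$.

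Substituting into the conclusion of Theorem~\ref{th:qcolorurn}, I use that $\psi_{a_i}=0$ for every $a\in\{1,2\}$ and every $i\in\mathbb N$. Hence the cross term $\sum_i\sum_{a\ne b}q_aq_b\psi_{a_i}\psi_{b_i}$ is identically $0$, and each factor of the infinite product equals $e^{-r\sum_a q_a\cdot 0}\big(1+\sum_{a=1}^2 e^{0}-2\big)=1\cdot(1+2-2)=1$, so the product is $1$; note this is an exact identity, so no further convergence argument is needed beyond what Theorem~\ref{th:qcolorurn} already supplies. Finally $\sum_{a=1}^2 q_a^2\phi_a=\tfrac14\phi_1+\tfrac14\phi_2=\tfrac14\sum_{a=1}^2\phi_a$, which yields $\lim_{n\to\infty}\P_n(s_nT_{n1}>r)=\exp\{-(1-\tfrac14\sum_{a=1}^2\phi_a)\,r^2/2\}$, as claimed.

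There is no real obstacle here beyond carefully keeping track of the dictionary between the two parametrizations; the corollary merely isolates the Gaussian-type limit that Theorem~\ref{th:qcolorurn} produces when no single renormalized urn weight survives in the limit (i.e.\ $\psi_{na_i},\theta_{na_i}\to 0$ for each fixed $i$) while the total $\ell^2$ masses $\phi_1,\phi_2$ may remain positive. As a consistency check, taking $p_{ni}=q_{ni}=1/n$ on $\{1,\dots,n\}$ gives $\phi_1=\phi_2=1$ and hence the limiting tail $e^{-r^2/4}$, matching the $q=2$ instance of Corollary~\ref{cor:uniform}.
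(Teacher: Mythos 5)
Your proposal is correct and matches the paper's intent exactly: the paper presents Corollary~\ref{cor:2colorurn} as a direct specialization of Theorem~\ref{th:qcolorurn} (with $q=2$, $q_1=q_2=\tfrac12$, $p_{n1_i}=p_{ni}$, $p_{n2_i}=q_{ni}$, and all limiting $\psi_{a_i}=0$) and offers no further argument. Your bookkeeping of the normalization $s_n$, the vanishing of the cross term and of the infinite product, and the consistency check against Corollary~\ref{cor:uniform} are all accurate.
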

%

The setup of Theorem \ref{th:qcolorurn} is very general and it can be used in various applications. However, when the urn selection distribution depends on the color of the ball, sometimes the scaling in Theorem \ref{th:qcolorurn} may not give a nontrivial limiting distribution as indicated in the following example.
 
\begin{example}
Let $p_{N1}\geq p_{N2}\geq \ldots $ be the probability distribution (\ref{eq:distributioN log}), and consider the following process: Every time choose one of two colors independently with probability 1/2; if color 1 is chosen, then with probability $p_{N i}$ a ball colored 1 goes to the $i$-th urn, otherwise color 2 is chosen and a ball with that color goes to the $i$-th urn with probability $\frac{1}{N+1}$. Let $T_{N1}$ be the first collision time. In this case, $s_N\log N\rightarrow 1$, where $s_N$ is as defined in (\ref{eq:sn}), and Theorem \ref{th:qcolorurn} gives $s_NT_N=T_N/\log N$ converges to infinity in probability. However, in this case, it can be easily shown that 
$$\lim_{N\rightarrow \infty}\P(T_N/\sqrt N\geq r)=e^{-\frac{r^2}{4}},$$
the Rayleigh distribution with parameter $\sqrt 2$. 
\end{example}

\section{Algorithms for the Discrete Logarithm Problem: Limiting Distribution of Running Times}
\label{sec:dlp}

The central idea of the Gaudry-Schost (GS) algorithm, as well as, the the kangaroo algorithm of Pollard is based on the collision time of 2 independent pseudo-random walks. Let $g$ and $h$ be the DLP instance, with $h = g^a$ for some integer $-N/2 \leq  a \leq N/2$, where $N$ is the size of the interval. The cyclic group $G$ generated by $g$ will often be described in terms of the exponent space. Define the {\it tame set} $T = [-N/2, N/2]$ and the {\it wild set} $W = a + T = \{a + b : b \in [-N/2, N/2]\}$.  A {\it tame walk} is a sequence of points $\{g^{a_i}\}_{i\geq 1}$ where $a_i\in T$ and a {\it wild walk} is a sequence of points $g^{b_i} = hg^{a_i}$ with $b_i\in W$.  Each walk proceeds until a distinguished point is hit. This distinguished point is then stored on a server, together with the corresponding exponent and a flag indicating which sort of walk it was. When the same distinguished point is visited by two different types of walk, there is a tame-wild collision giving an equation of the form $g^{a_i} = hg^{b_j}$, and the DLP is solved as $h = g^{a_i-b_j}$. 

The actual GS algorithm is much more complicated and although the starting
point of the pseudorandom walk will be random, inherently the rest of the steps are not random, and only a heuristic running time can be derived. Experimental evidence show that the pseudorandom walks get close enough to a random selection. Therefore, it is standard in the literature to assume that when $N$ is sufficiently large the pseudorandom walks performed by the algorithm is sufficiently random, and the running time can be analyzed by an idealized birthday problem involving the tame-wild collision. Throughout the paper we work with this assumption, and refer to the running times of these algorithms as {\it idealized} running times. 
Then, identifying the tame walks as being color 1 and the wild walks as being color 2, and the group elements as urns, the idealized running time of the GS algorithm is precisely when two balls of different colors are placed in the same urn. 

Generally, only the expectation of the collision time is used to quantify the performance of these algorithms, using the  birthday problem.
In the following theorem, the limiting distribution of the idealized running time of the GS algorithm for any problem instance is determined. It is assumed that the elements from $T$ and $W$ are sampled with probability 1/2 each, which means that at each step the two colors are chosen with probability 1/2 each. This is quite a realistic assumption as in practice one often considers distributed or parallel implementations of the algorithm \cite{birthday_discrete_logarithm,galbraithpkc}.

\begin{thm}Given an instance $(g, h)$ of the DLP with $h=g^{xN}$, where $x \in [-1/2, 1/2]$, the limiting distribution of the idealized running time $T_N^{(x)}$ of the GS algorithm is $$\lim_{N \rightarrow \infty}\P(T_N^{(x)}> r\sqrt{N})=e^{-\left(\frac{1-|x|}{2}\right)\frac{r^2}{2}}.$$\vspace{-0.15in}
\label{th:gaudryschost}
\end{thm}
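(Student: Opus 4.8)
The plan is to recognize the idealized GS algorithm as an instance of the two-color urn model underlying Corollary~\ref{cor:2colorurn} (equivalently Theorem~\ref{th:qcolorurn} with $q=2$ and uniform color selection) and then merely track the scaling constants. Take the urns to be indexed by the exponents in $T\cup W$, where $T=[-n/2,n/2]$ and $W=a+T$ with $a=xn$ (integer parts understood throughout). A tame step, occurring with probability $1/2$, places a color-$1$ ball uniformly on $T$, so $p_{ni}=1/|T|$ for $i\in T$ and $p_{ni}=0$ otherwise; a wild step, occurring with probability $1/2$, places a color-$2$ ball uniformly on $W$, so $q_{ni}=1/|W|$ for $i\in W$ and $q_{ni}=0$ otherwise. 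A tame--wild collision is exactly the event that some urn contains balls of both colors, so the idealized running time $T_n$ is the first collision time $T_{n1}$ of this urn model; note that the model automatically encodes the fact that collisions can occur only inside $T\cap W$, since any urn outside $T\cap W$ receives balls of only one color.

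Next I would check the hypotheses of Corollary~\ref{cor:2colorurn} and compute the relevant constants. Since $|T|=|W|=n+O(1)$, we have $p_{n1}=q_{n1}=1/|T|\to 0$ and $\psi_{ni}=p_{ni}/s_n$, $\theta_{ni}=q_{ni}/s_n$ are of order $n^{-1/2}$, hence tend to $0$. Writing $N=|T|=|W|$ and $M=|T\cap W|=n(1-|x|)+O(1)$, a direct count gives $\sum_i(p_{ni}+q_{ni})^2=(4M+2(N-M))/N^2=2(M+N)/N^2$, so
\[
s_n=\tfrac12\Bigl(\tfrac{2(M+N)}{N^2}\Bigr)^{1/2}=\Bigl(\tfrac{2-|x|}{2n}\Bigr)^{1/2}+o(n^{-1/2}),
\]
and $\sum_i\psi_{ni}^2=\sum_i\theta_{ni}^2=N^{-1}s_n^{-2}\to\phi_1=\phi_2=\frac{2}{2-|x|}$. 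Therefore $1-\tfrac14(\phi_1+\phi_2)=1-\frac{1}{2-|x|}=\frac{1-|x|}{2-|x|}$, and Corollary~\ref{cor:2colorurn} yields
\[
\lim_{n\to\infty}\P_n(s_nT_{n1}>\rho)=\exp\Bigl(-\tfrac{1-|x|}{2-|x|}\cdot\tfrac{\rho^2}{2}\Bigr).
\]

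Finally I would revert to the $\sqrt n$ scale. Since $s_n\sqrt n\to\sqrt{(2-|x|)/2}$, setting $\rho=r\sqrt{(2-|x|)/2}$ makes $\{s_nT_{n1}>\rho\}$ asymptotically equal to $\{T_n/\sqrt n>r\}$, and substituting into the last display collapses the exponent to $-\frac{1-|x|}{2-|x|}\cdot\frac{r^2(2-|x|)}{4}=-\bigl(\frac{1-|x|}{2}\bigr)\frac{r^2}{2}$, which is exactly the claimed formula. The only genuinely delicate points are bookkeeping: verifying that replacing $a=xn$, $|T|$, and $|T\cap W|$ by their integer parts perturbs $s_n$ and the $\phi_a$ only by $o(1)$ (so the limits are unaffected, using $1-|x|\ge 1/2$ to keep $M$ of order $n$), and recalling the standing idealization that lets us treat the pseudorandom walks as genuine i.i.d.\ uniform samples, under which the urn model applies verbatim.
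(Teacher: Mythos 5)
Your proposal is correct and follows essentially the same route as the paper: identify the idealized tame/wild sampling as the two-color uniform-selection urn model, apply Corollary \ref{cor:2colorurn} (the $q=2$ specialization of Theorem \ref{th:qcolorurn}) with $p_{ni}=1/|T|$ on $T$ and $q_{ni}=1/|W|$ on $W$, compute $s_n$ and $\phi_1=\phi_2$ from $|T\cap W|=(1-|x|)n$, and rescale from $s_n$ to $\sqrt{n}$. Your constants agree with the paper's ($s_n=\sqrt{(1-|x|/2)/n}$, $\phi_a=2/(2-|x|)$), and your extra bookkeeping about integer parts is a harmless refinement of the same argument.
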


\begin{proof}
By symmetry, it suffices to consider $0\leq x <1/2$. This implies that $|T\cap W|=(1-x)N$. Define $p_{N i}=1/N$, for $i \in T$, and $q_{N i}=1/N$, for $i \in W$. Then by~(\ref{eq:sn})
$$s_{N}=\sqrt{\frac{1-x/2}{N}}, \quad \text{ and } \quad \lim_{N\rightarrow \infty}\psi_{Ni}=\lim_{N\rightarrow \infty}\theta_{N i}=0.$$ Moreover, $\phi_1=\phi_2=\lim_{N}\sum_i \psi^2_{Ni}=\lim_{N\rightarrow \infty} \sum_i\theta^2_{N i}=\frac{1}{1-x/2}$.
Therefore, applying  Theorem \ref{th:qcolorurn} the result follows. 
\end{proof}

\begin{remark}\label{remgs} Theorem \ref{th:gaudryschost} shows $T_N^{(x)}/\sqrt{N}$ converges to a Rayleigh distribution with parameter $\left(\frac{2}{1-x}\right)^\frac{1}{2}$. Therefore, it is expected that in the limit 
$\E(T_N^{(x)})/\sqrt{N}\rightarrow (1-x)^{-\frac{1}{2}}\sqrt{\pi}$. (This can be made rigorous by showing the uniform integrability of the sequence $T_N^{(x)}/\sqrt N$, for example, by arguing that the second moment of $T_N^{(x)}/\sqrt N$ is bounded.) Assuming $x$ is uniformly distributed over $[-1/2, 1/2]$ gives, 
$2\int_{0}^{\frac{1}{2}}(1-x)^{-\frac{1}{2}}\sqrt{\pi N}=(4-2\sqrt 2)\sqrt{\pi N}\approx 2.08\sqrt{N}.$ 
This is the leading term of the expected heuristic running time of the GS algorithm averaged over all problem instances, which was proved earlier in \cite[Theorem 2]{galbraithpkc} using the birthday paradox.  
\label{rem:gsalgorithm}
\end{remark}

\subsection{Accelerated Gaudry-Schost (AGS) Algorithm} In groups where computing $h^{-1}$ for any group element $h$ is much faster than a general group operation the GS algorithm can be greatly  accelerated by performing random walks in sets of equivalence classes corresponding to the tame and wild sets. As before, let $N, g$ and $h$ be given such that
$4 | N$, $h = g^a$ and $-N/2 \leq a \leq N/2$. Define the tame and wild sets (as sets of equivalence classes) by
$$\tilde T = \{\{b,-b\} : b\in [-N/2, N/2]\}, ~ \tilde W = \{\{a + b, -(a + b)\} : b \in [-N/4, N/4]\}.$$
Note that $ |\tilde T | = 1+N/2 \approx N/2$. The algorithm samples alternately from $\tilde T$ and $\tilde W$ with probability $1/2$. 

\begin{thm}Given an instance $(g, h)$ of the DLP with $h=g^{xN}$, where $x \in [-1/2, 1/2]$, the limiting distribution of the idealized running time $T_N^{(x)}$ of the AGS algorithm of Galbraith and Ruprai \cite{galbraithpkc} is 
\begin{equation}
\lim_{N\rightarrow\infty}\P(T_N^{(x)}> r\sqrt{N})=\left\{
\begin{array}{cc}
e^{-\frac{r^2}{2}} &   \hbox{ if } |x|<1/4,\\
e^{-\left(3-4|x|\right)\frac{r^2}{4}} &    \hbox{ if } 1/4\leq |x|\leq 1/2.
\end{array}
\right.
\label{eq:dlpags}
\end{equation}
\label{th:dlpdistribution}\vspace{-0.1in}
\end{thm}

\begin{proof}Let  $(g, h)$ be the DLP instance with $h=g^{xN}$, and $T_N^{(x)}$ the idealized running time of the AGS algorithm of Galbraith and Ruprai \cite{galbraithpkc}. The analysis has two cases:

\begin{description}
\item[{\it Case} 1]$0 \leq x < 1/4$. In this case $\tilde W \subseteq \tilde T$ and the algorithm samples from $\tilde T$ and $\tilde W$ alternately and uniformly. This is equivalent to sampling uniformly from $[0, N/2]$ with probability 1/2, and sampling an element $b$ from $ [0, N/2]$ with probability $4/N$, for $0 \leq b < N/4-|x|N$, and probability $2/N$,  for $N/4-|x|N \leq b \leq  |x|N + N/4$, with probability 1/2. In this case, 
$$s_{N}=\sqrt{\frac{10-8x}{4N}}, \text{ and } \lim_{N\rightarrow \infty}\psi_{Ni}=\lim_{N\rightarrow \infty}\theta_{Ni}=0.$$ Moreover, $\phi_1= \frac{8}{10-8x}$ and $\phi_2=\frac{4(4-8x)}{10-8x}$.
Therefore,  applying  Theorem \ref{th:qcolorurn} it follows 
$$\lim_{N\rightarrow\infty}\P\left(T_N> r\sqrt{\frac{4N}{10-8x}}\right)=e^{-\left(\frac{1}{5-4x}\right)r^2}.$$

\item[{\it Case} 2]$1/4 \leq x \leq 1/2$. In this case, $|\tilde T\cap \tilde W|=N(3/4-|x|)$ (here $|\tilde T\cap \tilde W|$ refers to the number of equivalence classes in the intersection). 
The algorithm samples uniformly between the two sets $\tilde T$ and $\tilde W$, where $|\tilde T|=|\tilde W|\approx N/2$, and as in the proof of Theorem \ref{th:gaudryschost} the limiting distribution of the idealized running time can be obtained.
\end{description}
\end{proof}

\begin{figure*}[h]
\centering
\begin{minipage}[c]{0.49\textwidth}
\centering
\includegraphics[width=2.4in]
    {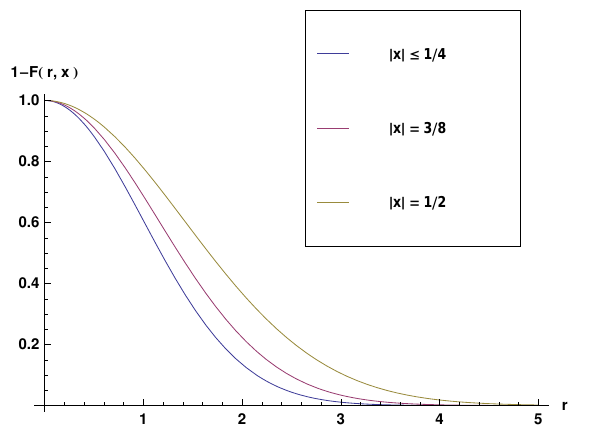}\\
\small{(a)}
\end{minipage}
\begin{minipage}[c]{0.49\textwidth}
\centering
\includegraphics[width=2.4in]
    {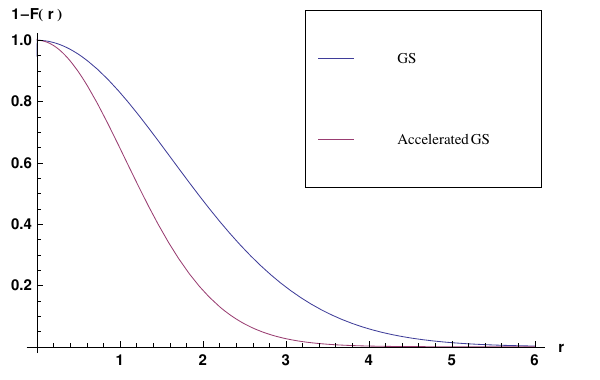}\\
\small{(b)} 
\end{minipage}
\caption{\small{(a) Limiting idealized hazard rate of the AGS algorithm for various problem instances, (b) Comparing limiting idealized hazard rates of GS and the AGS algorithms.}}\vspace{-0.1in}
\label{fig:plotalgorithms}
\end{figure*}

\begin{remark} Note that the limit~(\ref{eq:dlpags}) is a distribution function for every $x\in [-1/2, 1/2]$, where $x$ is the unknown exponent in the DLP problem. Generally $x$ is assumed to be uniformly distributed over $ [-1/2, 1/2]$ and the expected running time (as in Remark~\ref{remgs}, to make this rigorous, one has to argue that the sequence $T_N^{(x)}/\sqrt N$ is  uniformly integrable) will be
\begin{align*}
2\sqrt N&\left(\int_0^{1/4}\int_{0}^\infty e^{-\frac{r^2}{2}}drdx + \int_{1/4}^{1/2}\int_0^\infty e^{-\left(3-4|x|\right)\frac{r^2}{4}}dr dx\right)\nonumber\\
=& (5\sqrt{2}/4-1)\sqrt{\pi N}\approx 1.36\sqrt{N}.
\end{align*}
This is the leading term of the expected heuristic running time of the AGS algorithm averaged over all problem instances, which was proved by Galbraith and Ruprai (Theorem 4,  \cite{galbraithpkc}). Assuming the walks are truly random, Theorem \ref{th:dlpdistribution} gives the idealized asymptotic hazard rate of the AGS algorithm as $1-F(r, x)=\lim_{n \rightarrow \infty}\P(T_N^{(x)}/\sqrt{N}>r)$, quantifying which problem instances are easier/harder. Figure \ref{fig:plotalgorithms}(a) shows the asymptotic hazard rate for the AGS algorithm for various values $x\in [-1/2, 1/2]$. It is observed that as $x$ approaches $1/2$, the idealized running time of the AGS algorithm increases, which is expected as the intersection between the tame and wild sets decrease. Moreover, assuming that $x$ uniformly distributed over $[-1/2, 1/2]$, the performance of the different variants of the GS algorithms can be compared using the limiting idealized hazard rates averaged over all problem instances $$1-F(r)=\int_{-1/2}^{1/2}(1-F(r, x))dx.$$  Figure \ref{fig:plotalgorithms}(b) shows the limiting idealized hazard rates of the GS and AGS algorithms averaged over all problem instances. It shows that the limiting idealized running time of the GS algorithm stochastically dominates the AGS algorithm, that is, it is better than the GS algorithm not only in expectation but also for all values $r\geq 0$.
\label{rem:agsalgorithhm}
\end{remark}

\section{Collision Times in Sequential Graph Coloring Using Stein's Method}
\label{sec:steinlimit}

In this section Stein's method for Poisson approximation is used to determine the limiting distributions of the collision times for the preferential attachment model and the infinite path. To this end, recall the following version of Stein's method based on dependency graph:

\begin{thm}[Chatterjee et al. \cite{chatterjeediaconissteinsmethod}]
Suppose $\{X_i\}_{i\in \sI}$ is a finite collection of binary random variables with dependency graph $(\sI, \sE)$, that is,  $(X_i, X_j)\in \sE$, whenever $X_i, X_j$ are dependent.  Let $W=\sum_{i\in \sI} X_i$, $p_i=\P(X_i=1)$, $p_{ij}=\P(X_i=X_j=1)$, and $\lambda=\sum_{i\in \sI} p_i$.  Then\footnote{Note that $||W-\dPois(\lambda)||=\frac{1}{2}\sum_{i}|\P(W=i)-\frac{e^{-\lambda}\lambda^i}{i!}|$ is the total variation distance between $W$ and the Poisson distribution with mean $\lambda$.} $$||W-\dPois(\lambda)||\leq \min\big\{1, \frac{1}{\lambda}\big \}\left(\sum_{\substack{i \in \sI\\ j \in N(i)\backslash\{i\}} } p_{ij}+\sum_{\substack{i \in \sI \\ j \in N(i)}} p_ip_j\right).$$ 
\label{th:steinsdependencygraph}
\end{thm}

\subsection{Preferential Attachment Models: Proof of Theorem \ref{th:palimitdistribution}}
Recall the definition of the $PA(m)$ model and the graph sequence $(G_m^t)_{t\geq 1}$ from Section \ref{sec:paintro}. Denote by $S(G_{m}^t)$ the  underlying simple graph associated with $G_m^t$. Let $\sG=(G_m^{t})_{t\geq 1}$ and consider the coloring scheme described in Section~\ref{sec:paintro}. Let $T_{N1}^{PA(m)}$ be the first time there is a monochromatic edge in a sequential coloring of $\sG$. 

For $t\geq 1$ (possibly depending on $N$), let $X_{1, t}, X_{2, t}, \ldots $ be i.i.d. $\P_N$ ($X_{i, t}$ corresponds to the color of the vertex $i$ in $S(G_m^t)$). For $(i, j)\in E(S(G_m^t))$ define
$$Z_{(i, j)}^{(t)}=\pmb 1\{X_{i, t}=X_{j, t}\},$$ and $W_{t}=\sum_{(i, j)\in E(S(G_m^t))}Z_{(i, j)}^{(t)}$, which is the number of monochromatic edges in $S(G_m^t)$. Let $\lambda_{t}:=\E(W_{t})=|E(S(G_m^t))|\sum_{i}p_{N i}^2=(mt-o(t))\sum_{i}p_{N i}^2$. 

For two distinct edges $e_1=(i_1, j_1)$ and $e_2=(i_2, j_2)$,
$$\P(Z_{e_1}^{(t)}=Z_{e_2}^{(t)}=1)=
\left\{
\begin{array}{cc}
\left(\sum_{i}p_{N i}^2\right)^2 &   \text{ if } e_1\cap e_2 =\emptyset \\
\sum_{i}p_{N i}^{3} &   \text{ otherwise.}   \\
\end{array}
\right.
$$
since the random variables $Z_{e_1}^{(t)}$ and $Z_{e_2}^{(t)}$ are independent whenever the edges $e_1\cap e_2 =\emptyset $, the dependency graph associated with the random variables $\{Z_{(i, j)}^{(t)}\}_{(i, j)\in E(S(G_m^t)}$ can be constructed by putting an edge between $Z_{e_1}^{(t)}$ and $Z_{e_2}^{(t)}$ whenever  $e_1$ and $e_2$ share a vertex. Then the error term in (\ref{th:steinsdependencygraph}) becomes
\begin{eqnarray}\label{eq:steinpa}
\left|\P(W_{t}=k|G_{t})-e^{-\lambda_{t}}\frac{\lambda_{t}^k}{k!}\right|&\leq & C_{t} 2|T(S(G_m^t))| \left(\sum_{i}p_{N i}^{3}+\left(\sum_{i}p_{N i}^2\right)^2\right)\nonumber\\
& &\;\;\;\;\;\;\;\;\;\;\;+2C_t|E(S(G_m^t))|\left(\sum_{i}p_{N i}^2\right)^2,
\end{eqnarray}
where $C_{t}=\min(1, \lambda_{t}^{-1})$ and $T(S(G_m^t))$ is the number of 2-stars (the bipartite graph $K_{1, 2}$) in $S(G_m^t)$. 

If $t=t(N)$ is such that $\lim_{N\rightarrow\infty}t\sum_{i}p_{N i}^2=\lambda>0$ for some $\lambda>0$, then $\lambda_{t}\rightarrow m\lambda$, and $|E(S(G_m^t))|\left(\sum_{i}p_{N i}^2\right)^2 \leq p_{N1} \lambda_t \rightarrow 0$. Moreover, Bollob\'as \cite[Theorem 16]{bollobaspa} shows that 
$$(1-\varepsilon){m+1\choose 2}t\log t \leq |S(G_m^t)|\leq (1+\varepsilon){m+1\choose 2}t\log t,$$
with high probability as $t\rightarrow \infty$. Now, if $\lim_{N\rightarrow\infty} p_{N1}\log t=0$, then with high probability
$$\lim_{N\rightarrow\infty}\lim_{t\rightarrow \infty}|T(S(G_m^t))|\sum_{i}p_{N i}^{3}\leq \lambda(1+\varepsilon){m+1\choose 2} \lim_{N\rightarrow\infty}\lim_{t\rightarrow \infty}p_{N1}\log t=0.$$
Therefore, the RHS goes to zero at $N\rightarrow \infty$ and by dominated convergence theorem, the number of monochromatic edges $W_{t}$ converges in distribution to $\dPois(\lambda)$. 

Thus, taking $t=\lfloor\frac{r}{s_{N}^2}\rfloor$, we get $$\lim_{N\rightarrow \infty}\P\left(s_{N}^2T_{N}^{PA(m)} > r\right)=\lim_{N\rightarrow \infty}\P(W_{ \lfloor\frac{r}{s_{N}^2}\rfloor}=0)=e^{-mr}, $$
completing the proof of Theorem \ref{th:palimitdistribution}.

\subsection{The Infinite Path} Define $\cT_{N, m}$ to be the first time there exists a monochromatic path of length $m$ in a sequential coloring of the infinite path $\cZ$ with the probability distribution $\P_N$. This problem can be re-formulated as follows: $\{X_{i}\}_{i\geq 1}$ be an i.i.d. $\P_N$ sequence, and  
\begin{equation}
\cT_{N, m}=\inf\{t\geq m: X_{t}=\ldots=X_{t-m+1}\}.
\label{eq:colorpath}
\end{equation}
Similar to the proof of Theorem \ref{th:palimitdistribution}, using the Stein method based on dependency graph, the limiting distribution of $\cT_{N, m}$ can be determined.


\begin{thm}Let $s_{N, m}:=\left(\sum_{i}p_{N i}^m\right)^\frac{1}{m}$ and suppose $\lim_{N\rightarrow \infty}p_{N1}=0$. Then for $r \geq 0$,
\begin{equation}
\lim_{N\rightarrow \infty}\P\left(s_{N, m}^m\cT_{N, m} > r\right) = e^{-r}.
\label{eq:pathlimitdistribution}
\end{equation}
\label{th:pathlimitdistribution}
\end{thm}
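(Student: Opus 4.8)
The plan is to follow the template of the proof of Theorem~\ref{th:palimitdistribution}: express the event $\{T_{nm}^\cZ > s\}$ as the vanishing of a sum of (now overlapping) monochromatic-window indicators, apply the dependency-graph Stein bound of Theorem~\ref{th:steinsdependencygraph} to show this sum is asymptotically Poisson, and then read off the probability of its atom at $0$. Throughout one assumes $m\ge 2$ (for $m=1$ one has $T_{n1}^\cZ\equiv 1$ and $s_{n,1}=1$, so the statement is degenerate).

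Concretely, for the path $P_s$ on vertices $\{1,\dots,s\}$ put $Y_j^{n,s}:=\pmb 1\{X_{n,j}=X_{n,j+1}=\cdots=X_{n,j+m-1}\}$ for $1\le j\le s-m+1$, and let $W_{n,s}:=\sum_{j=1}^{s-m+1}Y_j^{n,s}$ count the monochromatic copies of $P_m$ in the sequential coloring of $P_s$. Reindexing each length-$m$ window by its right endpoint shows $\{T_{nm}^\cZ>s\}=\{W_{n,s}=0\}$, and since $T_{nm}^\cZ$ is integer valued, $\P(s_{n,m}^m T_{nm}^\cZ>r)=\P\big(T_{nm}^\cZ>\lfloor r/s_{n,m}^m\rfloor\big)=\P(W_{n,s_n}=0)$ with $s_n:=\lfloor r/s_{n,m}^m\rfloor$. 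Each indicator satisfies $\P(Y_j^{n,s}=1)=\sum_i p_{ni}^m=s_{n,m}^m$, so $\lambda_{n,s_n}:=\E W_{n,s_n}=(s_n-m+1)s_{n,m}^m$. Because $p_{n1}\to 0$ and $m\ge 2$, one has $s_{n,m}^m=\sum_i p_{ni}^m\le p_{n1}^{m-1}\sum_i p_{ni}=p_{n1}^{m-1}\to 0$, whence $s_n\to\infty$ and $\lambda_{n,s_n}\to r$.

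For the dependency graph, $Y_j^{n,s}$ and $Y_k^{n,s}$ are independent as soon as the windows $\{j,\dots,j+m-1\}$ and $\{k,\dots,k+m-1\}$ are disjoint, so we join them exactly when $0<|j-k|\le m-1$, giving at most $2(m-1)$ neighbours per vertex. The key point is that two \emph{overlapping} distinct windows at distance $\ell=|j-k|\in\{1,\dots,m-1\}$ force the whole union of $m+\ell$ vertices to receive a common color, so $\P(Y_j^{n,s}=Y_k^{n,s}=1)=\sum_i p_{ni}^{m+\ell}\le p_{n1}\sum_i p_{ni}^m=p_{n1}\,s_{n,m}^m$. Feeding this into Theorem~\ref{th:steinsdependencygraph} with $s=s_n$, the clustering error term is at most $2(m-1)p_{n1}\cdot\big[(s_n-m+1)s_{n,m}^m\big]$ and the product error term is at most $(2m-1)s_{n,m}^m\cdot\big[(s_n-m+1)s_{n,m}^m\big]$; since the bracketed quantity tends to $r$ while $p_{n1}\to 0$ and $s_{n,m}^m\to 0$, and $\min\{3,\lambda^{-1}\}\le 3$, both tend to $0$. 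Hence $W_{n,s_n}\stackrel{\sD}{\rightarrow}\mathrm{Poisson}(r)$, so $\P(W_{n,s_n}=0)\to e^{-r}$, which is the claim.

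The only genuinely delicate point is this overlapping-window estimate: a naive bound $\P(Y_j^{n,s}=Y_k^{n,s}=1)\le s_{n,m}^m$ would leave the first Stein term of order $m\lambda_{n,s_n}$, which does \emph{not} vanish. What rescues the argument is that the overlap of two length-$m$ monochromatic windows is itself a strictly longer monochromatic window, producing the extra factor $p_{ni}$ (summing to at most $p_{n1}s_{n,m}^m$); combined with the hypothesis $p_{n1}\to 0$, this is precisely what drives the clustering term to $0$. Everything else---the identification $\{T_{nm}^\cZ>s\}=\{W_{n,s}=0\}$, the choice $s_n=\lfloor r/s_{n,m}^m\rfloor$, and the bound on the ``small'' product term by $(2m-1)s_{n,m}^m\cdot\lambda_{n,s_n}$---is routine bookkeeping parallel to the proof of Theorem~\ref{th:palimitdistribution}.
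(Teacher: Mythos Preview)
Your proposal is correct and follows essentially the same approach as the paper's own proof: both introduce the monochromatic-window indicators, build the dependency graph by joining windows at distance less than $m$, and apply Theorem~\ref{th:steinsdependencygraph}. Your version is in fact more careful---you correctly record the joint probability of two overlapping windows as $\sum_i p_{ni}^{m+\ell}$ (the paper's displayed exponent $m-|s_2-s_1|$ is a typo for $m+|s_2-s_1|$), and you isolate explicitly why the extra factor $p_{n1}$ in the clustering term is what makes the Stein bound vanish.
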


\begin{proof}For $t\geq m$, and $s\in[1, t-m+1]$, define
$$Z_{s, t}=\pmb 1\{X_{ s}=X_{s+1}=\cdots=X_{s+m-1}\},$$ and $W_{t}=\sum_{s=1}^{t-m+1} Z_{s, t}$. Note that $W_{t}$ counts the number of monochromatic paths with $m$ vertices in the path spanned by the vertices $\{1, 2, \ldots t\}$. Note that $\lambda_{t}:=\E(W_{t})=(t-m+1)\sum_{i}p_{N i}^m$ and
$$\P(Z_{s_1, t}=Z_{s_2, t}=1)=
\left\{
\begin{array}{cc}
\left(\sum_{i}p_{N i}^m\right)^2 &   \text{ if } |s_2-s_1|\geq m   \\
\sum_{i}p_{N i}^{m-|s_2-s_1|} &   \text{ otherwise.}   \\
\end{array}
\right.
$$
Since $Z_{s_1, t}$ and $Z_{s_2, t}$ are independent if $|s_2-s_1|\geq m$, the dependency graph associated with the variables $\{Z_{s, t}\}_{s\geq 1}$ has an edge between $Z_{s_1, t}$ and $Z_{s_2, t}$ if $|s_2-s_1|< m$. Then the error term in (\ref{th:steinsdependencygraph}) becomes
$$||W_{i}-\dPois(\lambda_{i})||_{\mathrm{TV}}\leq \frac{4m(t-m+1)\left(\sum_{b=0}^{m-1}\sum_{i}p_{N i}^{m-b}+\left(\sum_{i}p_{N i}^m\right)^2 \right)}{(t-m+1)\sum_{i}p_{N i}^m}.$$
The error term in the RHS goes to 0 if $t=t(N)$ is such that $\lambda_t=(t-m+1)\sum_{i}p_{N i}^m\rightarrow \lambda>0$, as $N\rightarrow\infty$. 

For $r > 0$, let $t=\ceil{\frac{r}{s_{N, m}^m}}$. Then $$\P\left(s_{N,m}^m\cT_{N, m} > r\right)=\P(W_{ \lfloor\frac{r}{s_{N, m}^m}\rfloor}=0)\rightarrow e^{-r},$$
completing the proof of the result.
\end{proof}

\small{\noindent{\bf Acknowledgement:} The author is indebted to his advisor Persi Diaconis for proposing the problem to him and for his inspirational guidance. The author thanks Sourav Chatterjee, Shirshendu Ganguly, Prasad Tetali, and Qingyuan Zhao for helpful discussions. The author also thanks an anonymous referee for carefully reading the manuscript and for providing many helpful comments, which improved the quality and presentation of the paper.}

\normalsize
\appendix
\section{Verifying Absolute Convergence}
\label{sec:appendix}

For every $a\in [q]$, let $\vec \psi_a=(\psi_{1, a}, \psi_{2, a}, \ldots)$ and $\sum_i(\sum_{a=1}^qc_a\psi_{i, a})^2<\infty$. Define 
\begin{equation}
g(r, \vec \psi_1, \vec \psi_2, \ldots, \vec \psi_q)=\sum_i \left\{-r\sum_{a=1}^qc_a\psi_{i, a}+ \log\left(1+\sum_{a=1}^q e^{rc_a\psi_{i, a}}-q\right)\right\}.
\label{eq:g}
\end{equation}
Also, let $Q_i(z)=\sum_{s=1}^\infty \gamma_{i, s}\frac{z^s}{s!}$ and $\gamma_{i, s}=\sum_{a=1}^q c_a^s \psi_{i, a}^s$. By standard rearrangement identities \cite{fsbook}, for $s\geq 1$, 
\begin{align}
\label{Qzk}
Q_i^s(z)=\sum_{\substack{j_1, j_2, \ldots, j_s\\j_t\geq 1, \forall t \in [s]}}z^{\sum_{b=1}^s j_b}\left(\prod_{b=1}^s \frac{\sum_{a=1}^q c_a^{j_b}\psi_{i, a}^{j_b}}{j_b!}\right).
\end{align}

\begin{lem}Let $\sum_i(\sum_{a=1}^qc_a\psi_{i, a})^2:=C<\infty$, and define $\psi_{\max}=\max_{a\in[q]}\max_{i \in [n]}\psi_{i, a}$. Then for $r \in R=\{r\in \R^+: \psi_{\max}r < \log(1+1/q)\}$,	
\begin{align}
\label{eq:gexpansion}
g(r, \vec \psi_1, \vec \psi_2, \ldots, \vec \psi_q)=&-\frac{r^2}{2}\sum_i\left(\left(\sum_{a}c_a\psi_{i, a}\right)^2-\sum_{a=1}^q c_a^2\psi_{i, a}^2\right)\nonumber\\
 &+A(r, \vec \psi_1, \vec \psi_2, \ldots, \vec \psi_q)+B(r, \vec \psi_1, \vec \psi_2, \ldots, \vec \psi_q),
\end{align}
where
\begin{align*}
A(r, & \vec \psi_1, \vec \psi_2, \ldots,  \vec \psi_q)\nonumber\\
=& \sum_{j_1\geq 3}\frac{r^{j_1}}{j_1!}\sum_i\sum_{a=1}^q c_a^{j_1}\psi_{i, a}^{j_1}-\frac{1}{2}\sum_{\substack{j_1\geq 1, j_2\geq 1,\\j_1+j_2\geq 3}}r^{\sum_{b=1}^2 j_b}\sum_i\left(\prod_{b=1}^2 \frac{\sum_{a=1}^q c_a^{j_b}\psi_{i, a}^{j_b}}{j_b!}\right),
\end{align*}
and
$$B(r, \vec \psi_1, \vec \psi_2, \ldots, \vec \psi_q)=\sum_{s=3}^\infty\frac{(-1)^{s+1}}{s}Q_i^s(z).$$
Moreover, the series~(\ref{eq:gexpansion}) is absolutely convergent for $r\in R$.
\label{lm:seriesq}
\end{lem} 

\begin{proof}For $r\in R$, $\psi_{i, a}<\frac{1}{r c_a}\log\left(1+1/q\right)$, for all $a\in [q]$ and $i \in \mathbb N$, and  $|\sum_{a=1}^q e^{rc_a\psi_{i, a}}-q|< 1$. Thus, using the expansion of $\log(1+z)$, for $|z|<1$, and the expansion of $e^{-z}$, for all $z\in \mathbb R$, 
\begin{align}\label{gr}
g(r, \vec \psi_1, \ldots, \vec \psi_q)=&\sum_i\left\{-r \sum_{a=1}^qc_a\psi_{i, a}+\sum_{s=1}^\infty\frac{(-1)^{s+1}}{s}\left(\sum_{a=1}^q e^{rc_a\psi_{i, a}}-q\right)^s\right\}\nonumber\\
=&\sum_i \left\{-r\sum_{a=1}^qc_a\psi_{i, a}+ \sum_{s=1}^\infty\frac{(-1)^{s+1}}{s}\left(\sum_{a=1}^q \sum_{t=1}^\infty\frac{r^tc_a^t\psi_{i, a}^t}{t!}\right)^s\right\}\nonumber\\
=&\sum_i \left\{-r\sum_{a=1}^qc_a\psi_{i, a}+ \sum_{s=1}^\infty\frac{(-1)^{s+1}}{s} Q_i(r)^s\right\},
\end{align}

Note that $Q_i(r)=r\sum_{a=1}^q c_a\psi_{i, a} + \sum_{j_1=2}^\infty \frac{r^{j_1}}{j_1!}\left( \sum_{a=1}^q c_a^{j_1}\psi_{i, a}^{j_1} \right)$. Thus, (\ref{gr}) implies, 
\begin{align}
g(r, & \vec \psi_1, \vec \psi_2, \ldots, \vec \psi_q)=\sum_{i}\left\{\sum_{j_1=2}^\infty \frac{r^{j_1}}{j_1!} \left( \sum_{a=1}^q c_a^{j_1}\psi_{i, a}^{j_1}\right)+\sum_{s=2}^\infty\frac{(-1)^{s+1}}{s} Q_i^s(r)\right\}.
\end{align}

Define $$\sS:=\sum_{i}\left\{\sum_{j_1=2}^\infty \frac{r^{j_1}}{j_1!} \left( \sum_{a=1}^q c_a^{j_1}\psi_{i, a}^{j_1}\right)+\sum_{s=2}^\infty\frac{Q_i^s(r)}{s} \right\}.$$
To show the series $g(r, \vec \psi_1, \vec \psi_2, \ldots, \vec \psi_q)$ is absolutely convergent for $r \in R$, it suffices to show $\sS< \infty$. To this end,
\begin{eqnarray}\label{s}
\sS&\leq&\sum_{j_1=2}^\infty r^{j_1}\sum_i\sum_{a=1}^q c_a^{j_1}\psi_{i, a}^{j_1}+\sum_{s=2}^\infty\sum_{i}Q_i^s(r).
\end{eqnarray}
Note that, for $r \in R$, 
\begin{align*}
\sum_{j_1= 2}^\infty r^{j_1}\sum_i\sum_{a=1}^q c_a^{j_1}\psi_{i, a}^{j_1} \leq \sum_{j_1= 2}^\infty r^{j_1}\sum_i\left(\sum_{a=1}^q c_a\psi_{i, a}\right)^{j_1} \leq & C\sum_{j_1= 2}^\infty r^{j_1}\psi_{\max}^{j_1-2}\nonumber\\
\leq & \frac{Cr^2}{(1-r{\psi_{\max}})}.
\end{align*} Similarly, recalling (\ref{Qzk}), it can shown that 
$\sum_{s=2}^\infty\sum_{i}Q_i^s(r)< \infty$, for $r\in R$.
This implies that the series in the RHS of~\eqref{s} is finite, and $g(r, \vec \psi_1, \ldots, \vec \psi_q)$ is absolutely convergent. 

The result now follows by interchanging the order of the summations and rearranging the terms.
\end{proof}

\end{document}